\newtheorem{theorem}{Theorem}[section]
\newtheorem{lemma}[theorem]{Lemma}
\newtheorem{definition}[theorem]{Definition}
\newtheorem{example}[theorem]{Example}
\newtheorem{proposition}[theorem]{Proposition}
\newtheorem{corollary}[theorem]{Corollary}
\newcommand{\F}{\boldsymbol{F}}
\newcommand{\FF}{\overline{\boldsymbol{F}}}
\definecolor{darkgreen}{rgb}{0.03, 0.5, 0.03}
 \newcommand{\bc}{\color{blue}} %
 \newcommand{\Na} {\mbox{\rm Na}}
  \newcommand{\Max} {\mbox{\rm Max}}
    \newcommand{\Spec} {\mbox{\rm Spec}}
      \newcommand{\QMax} {\mbox{\rm QMax}}
        \newcommand{\QSpec} {\mbox{\rm QSpec}}
          \newcommand{\st}{\star}
\newcommand{\na}{{\mbox{\rm Na}}(D,\st)}
\newcommand{\cal} {\mathcal}
\newcommand{\Fb}{\boldsymbol{\overline{F}}}
\newcommand{\f}{\boldsymbol{{f}}}
  \newcommand{\stf} {\star{_{\!{_f}}}}
    \newcommand{\stt} {\widetilde{\star}}
\begin{document}

%\title[{\rc Idempotence and Divisoriality in Pr\"ufer-like domains}]{{\rc Semistar Nagata rings and Idempotence and divisoriality in Pr\"ufer-like domains}}

\title{Idempotence and divisorialty in Pr\"ufer-like domains}

\author{Marco Fontana}
\address{Dipartimento di Matematica e Fisica, Universit\`a degli Studi Roma Tre, Largo San Leonardo Murialdo, 1, 00146 Roma, Italy}
\email{fontana@mat.uniroma3.it}

\author{Evan Houston}
\address{Department of Mathematics and Statistics, University of North Carolina at Charlotte,
Charlotte, NC 28223 U.S.A.}
\email{eghousto@uncc.edu}

\author{Mi Hee Park}
\address{Department of Mathematics,
Chung-Ang University, Seoul  06974, Korea}
\email{mhpark@cau.ac.kr}

\thanks{ The first-named author was partially supported   by {\sl GNSAGA} of {\sl Istituto Nazionale di Alta Matematica}.}

\thanks{{The second-named author was supported by a grant from the Simons Foundation (\#354565).}}

\thanks{The third-named author was supported by the National Research Foundation of Korea (NRF) grant funded by the Korea government(MSIP) (No. 2015R1C1A2A01055124).}

{\bc  \date{\today}}
%\subjclass[2010]{13B25, 13A15, 13G05, 13B22}
%\keywords{Star and semistar operation; Nagata ring; Kronecker function ring.}
%
%\address{Dipartimento di Matematica, Universit\`a degli Studi
%``Roma Tre'', 00146 Rome, Italy.}
%\email{fontana@mat.uniroma3.it }

%
%
\begin{abstract} Let $D$ be a Pr\"ufer $\st$-multiplication domain, where $\st$ is a semistar operation on $D$.  We show that certain ideal-theoretic properties related to idempotence and divisoriality hold in Pr\"ufer domains, and we use the associated semistar Nagata ring of $D$ to show that the natural counterparts of these properties also hold in $D$.
 \end{abstract}

\maketitle

%%%%%%%%%%%%%%%%%%%%%%%%%%%%%%%%%%%%%%%%%%%%%
%%%%%%%%%%%%%%%%%%%%%%%%%%%%%%%%%%%%%%%%%%%%%
%%%%%%%%%%%%%%              INTRODUCTION              %%%%%%%%%%%%%%
%%%%%%%%%%%%%%%%%%%%%%%%%%%%%%%%%%%%%%%%%%%%%
%%%%%%%%%%%%%%%%%%%%%%%%%%%%%%%%%%%%%%%%%%%%%

\section{introduction and preliminaries} Throughout this work, $D$ will denote an integral domain, and $K$ will denote its quotient field.  %Briefly, a semistar operation on a domain $D$ is a closure operation $\st$ on the set of $D$-submodules of $K$ that also satisfies the condition $(uA)^{\st}=uA^{\st}$ for each such $D$-module and each nonzero $u \in K$ (precise definition given below).
Recall that Arnold \cite{ar} proved that $D$ is a Pr\"ufer domain if and only if its associated Nagata ring $D[X]_N$, where $N$ is the set of polynomials in $D[X]$ whose coefficients generate the unit ideal, is a Pr\"ufer domain.  This was generalized to Pr\"ufer $v$-multiplication domains (P$v$MDs) by Zafrullah \cite{z} and Kang \cite{K-89} and to Pr\"ufer $\st$-multiplication domains (P$\st$MDs) by Fontana, Jara, and Santos \cite{FJS-03}.

Our goal in this paper is to show that certain ideal-theoretic properties that hold in Pr\"ufer domains %, and therefore in $\na$,
 transfer in a natural way to P$\st$MDs. %Our primary goal is show that if $D$ is a P$\st$MD, then certain ideal-theoretic properties related to idempotence and divisoriality that hold in $\na$ have natural counterparts that hold in $D$ and then to show that via this ``machine'' we can also transfer characterizations of these properties.
For example, we show  that an ideal $I$ of a Pr\"ufer domain is idempotent if and only if it is a radical ideal each of whose minimal primes is idempotent (Theorem~\ref{t:idemp}), and we use a Nagata ring transfer ``machine'' to transfer a natural counterpart of this characterization to P$\st$MDs. % an ideal $I$ is $\stf$-idempotent (meaning $I^{\stf}=(I^2)^{\stf}$) if and only if $I^{\stf}\cap D$ is a radical ideal each of whose minimal primes is $\stf$-idempotent.
 For another example, in Theorem~\ref{t:idemp-div} we show that an ideal in a Pr\"ufer domain of finite character is idempotent if and only it is a product of idempotent prime ideals and, perhaps more interestingly, we characterize ideals that are simultaneously idempotent and divisorial as (unique) products of incomparable divisorial idempotent primes; and we then extend this to P$\st$MDs.

Let us review terminology and notation. %Let $D$ be an integral domain with quotient field $K$.
Denote by $\boldsymbol{\overline{F}}(D)$ the set of all nonzero
$D$--submodules of   $K$, and by $\boldsymbol{F}(D)$ the set of
all nonzero fractional ideals of $D$, i.e., $E \in
\boldsymbol{F}(D)$ if $E \in \boldsymbol{\overline{F}}(D)$ and
there exists a nonzero $d \in D$ with $dE \subseteq D$. Let
$\f(D)$ be the set of all nonzero finitely generated
$D$--submodules of $K$. Then, obviously, $\f(D)
\subseteq \boldsymbol{F}(D) \subseteq
\boldsymbol{\overline{F}}(D)$.

Following Okabe-Matsuda \cite{o-m}, a \emph{semistar operation} on $D$ is a map $\star:
\boldsymbol{\overline{F}}(D) \to \boldsymbol{\overline{F}}(D),\ E \mapsto E^\star$,  such that, for all $x \in K$, $x \neq 0$, and
for all $E,F \in \boldsymbol{\overline{F}}(D)$, the following
properties hold:
\begin{enumerate}
\item[$(\star_1)$] $(xE)^\star=xE^\star$;
 \item[$(\star_2)$] $E
\subseteq F$ implies $E^\star \subseteq F^\star$;
\item[$(\star_3)$] $E \subseteq E^\star$ and $E^{\star \star} :=
\left(E^\star \right)^\star=E^\star$.
\end{enumerate}

Of course, semistar operations are natural generalizations of star operations--see the discussion following Corollary~\ref{c:sharp} below.

The semistar operation $\st$ is said to have \emph{finite type} if $E^{\st}=\bigcup \{F^{\st} \mid F\in \f(D), F\subseteq E\}$ for each $E \in \Fb(D)$. To any semistar operation $\st$ we can associate a finite-type semistar operation $\stf$ given by $$E^{\stf}=\bigcup \{F^{\st} \mid F \in \f(D), F \subseteq E\}.$$

%When $\star$ is a (semi)star operation, the notion of  quasi-$\star$-ideal coincides with the ``classical'' notion of  \it  $\star$-ideal \rm (i.e., a nonzero ideal $I$ such that $I^\star = I$).
%Of course, semistar operations are generalizations of \emph{star} operations: a map $\st: $\boldsymbol{F}(D) \to \boldsymbol{F}(D)$ that satisfies the conditions $(\st_1)$, $(\st_2)$ and $(\st_3)$ above (on $\boldsymbol{F}(D)$) and also satisfies $D^{\st}=D$.

We say that a nonzero ideal $I$ of $D$ is a
\emph{quasi-$\star$-ideal} if $I=I^\star \cap D$, a
\emph{quasi-$\star$-prime  ideal} if it is a prime quasi-$\star$-ideal,
and a \emph{quasi-$\star$-maximal  ideal} if it is maximal in the set of
all   proper   quasi-$\star$-ideals. A quasi-$\star$-maximal ideal is  a
prime ideal. We
will denote by $\QMax^{\star}(D)$  ($\QSpec^\star(D)$) the set of all
quasi-$\star$-maximal ideals  (quasi-$\star$-prime ideals) of $D$.
While quasi-$\st$-maximal ideals may not exist, quasi-$\stf$-maximal ideals are plentiful in the sense that each  proper   quasi-$\star_{_{\!
f}}$-ideal is contained in a quasi-$\star_{_{\! f}}$-maximal
ideal.  (See \cite{FL3} for details.) Now we can associate to $\st$ yet another semistar operation: for $E \in \Fb(D)$, set  $$E^{\stt}=\bigcap \{ED_Q\mid Q \in \QMax^{\stf}(D)\}.$$ Then $\stt$ is also a finite-type semistar operation, and we have $I^{\stt} \subseteq I^{\stf} \subseteq I^{\st}$ for all $I \in \Fb(D)$.

%Corresponding to a given semistar operation $\st$ on $D$, we can define two important related semistar operations $\stf$ and $\stt$ as follows: $$E^{\stf}=\bigcup \{F^{\st} \mid F \in \f(D), F \subseteq E\},$$ and $$E^{\stt}=\bigcap \{ED_Q \mid Q \in \QMax^{\stf}(D)\}.$$ The semistar operation $\stf$ is called the \emph{finite-type semistar operation associated to $\st$}

%  If $\star$ is a semistar operation on $D$, then we can
%consider a map\ $\star_{\!_f}: \boldsymbol{\overline{F}}(D) \to
%\boldsymbol{\overline{F}}(D)$ defined, for each $E \in
%\boldsymbol{\overline{F}}(D)$, as follows:
%
%\centerline{$E^{\star_{\!_f}}:=\bigcup \{F^\star\mid \ F \in
%\boldsymbol{f}(D) \mbox{ and } F \subseteq E\}$.}
%
%\noindent It is easy to see that $\star_{\!_f}$ is a semistar
%operation on $D$, called \emph{the  finite
%type semistar operation associated to $\star$} or \emph{the semistar operation of finite
%type associated to $\star$}.  Note that, for each $F \in
%\boldsymbol{f}(D)$, $F^\star=F^{\star_{\!_f}}$.  A semistar
%operation $\star$ is called a \emph{semistar operation of finite
%type} (or,  \emph{finite
%type semistar operation})  if $\star=\star_{\!_f}$.  It is easy to see that
%$(\star_{\!_f}\!)_{\!_f}=\star_{\!_f}$ (that is, $\star_{\!_f}$ is
%of finite type).

Let $\st$ be a semistar operation on $D$.  Set $N(\st)=\{g \in D[X] \mid c(g)^{\st}=D^{\st}\}$, where $c(g)$ is the \emph{content} of the polynomial $g$, i.e., the ideal of $D$ generated by the coefficients of $g$.  Then $N(\st)$ is a saturated multiplicatively closed subset of $D[X]$, and we call the ring $\na:=D[X]_{N(\st)}$ the \emph{semistar Nagata ring of $D$ with respect to $\st$}.  The domain $D$ is called a \emph{Pr\"ufer $\st$-multiplication domain} (P$\st$MD) if $(FF^{-1})^{\stf}=D^{\stf} \ (=D^{\st})$ for each $F \in \f(D)$ (i.e., each such $F$ is $\stf$-invertible).   (Recall that $F^{-1}=(D:F)=\{u \in K \mid uF \subseteq D\}$.)
%then the ring $$\na:=\{f/g \mid f,g\in D[X], g \ne 0, c(g)^{\st}=D^{\st}\}$$ is called the \emph{semistar Nagata ring of $D$ with respect to $\st$}. If we set $N(\st)=\{g \in D[X] \mid c(g)^{\st}=D^{\st}\}$, then $N(\st)$ is a saturated multiplicatively closed subset of $D[X]$, and it is clear that $\na=D[X]_{N(\st)}$.

In the following two lemmas, we assemble the facts we need about Nagata rings and P$\st$MDs.  Most of the proofs can be found in \cite{FH}, \cite{FL3}, or \cite{fhp}.

\begin{lemma} \label{l:nagata} Let $\st$ be a semistar operation on $D$. Then: \begin{enumerate}
\item $D^{\st}=D^{\stf}$.
\item $\QMax^{\stf}(D)=\QMax^{\stt}(D)$.
\item The map $\QMax^{\stf}(D) \to \Max(\na)$, $P \mapsto P\na$, is a bijection with inverse map $M \mapsto M\cap D$.
\item $P \mapsto P\na$ defines an injective map $\QSpec^{\stt}(D) \to \Spec(\na)$.
\item $N(\st)=N(\stf)=N(\stt)$ and $($hence$)$ $\na={\rm Na}(D,\stf)={\rm Na}(D,\stt)$.
\item For each $E\in \Fb(D)$, $E^{\stt}=E\na \cap K$, and $E^{\stt}\na=E\na$.
\item A nonzero ideal $I$ of $D$ is a quasi-$\stt$-ideal if and only if $I=I\na \cap D$. %\qed
%\item $\na={\rm Na}(D,\stt)$.
\end{enumerate}
\end{lemma}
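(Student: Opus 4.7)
The plan is to verify items (1)--(7) in order, since the harder items draw on the easier ones. Items (1) and (5) are largely direct from the definitions: for (1), since $D = (1) \in \f(D)$, the union defining $\stf$ gives $D^{\st} \subseteq D^{\stf}$, while $(\st_2)$ applied to every $F \in \f(D)$ with $F \subseteq D$ gives the reverse inclusion; for (5), since $c(g)$ is finitely generated, $c(g)^{\st} = c(g)^{\stf}$, yielding $N(\st) = N(\stf)$, and the further equality $N(\stf) = N(\stt)$ reduces to (2) (a finitely generated $J$ has $J^{\stf} = D^{\stf}$ iff $J$ is contained in no quasi-$\stf$-maximal ideal, iff $J^{\stt} = D^{\stt}$).

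The pivotal item is (2). I would first use the inclusion $I^{\stt} \subseteq I^{\stf}$ (noted in the excerpt) to see that every quasi-$\stf$-ideal is a quasi-$\stt$-ideal: $I \subseteq I^{\stt} \cap D \subseteq I^{\stf} \cap D = I$. Second, every proper quasi-$\stt$-ideal $I$ sits inside some $Q_0 \in \QMax^{\stf}(D)$, since $1 \notin I^{\stt} = \bigcap_{Q \in \QMax^{\stf}(D)} ID_Q$. Combining these two facts yields (2): a quasi-$\stf$-maximal $P$ is a proper quasi-$\stt$-ideal, and any strictly larger proper quasi-$\stt$-ideal $I$ would sit inside some quasi-$\stf$-maximal $Q_0 \supseteq P$, forcing $Q_0 = P$ by $\stf$-maximality and hence $I = P$, a contradiction; the reverse direction is symmetric. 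Items (3) and (4) then follow from the standard Nagata-ring correspondence between primes of $\na$ and primes of $D[X]$ disjoint from $N(\st)$: the key check is $P \cap N(\st) = \emptyset$ for every quasi-$\stf$-maximal $P$ (else some $g \in P$ with $c(g)^{\stf} = D^{\stf}$ would give $P^{\stf} = D^{\stf}$), and conversely any $M \in \Max(\na)$ contracts to an ideal of $D$ disjoint from $N(\st)$, which translates to a proper quasi-$\stf$-ideal; maximality in $D$ matches maximality in $\na$. Part (4) is this correspondence specialized to quasi-$\stt$-primes, using (2) to locate their maximal overideals.

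For (6), given $x \in E\na \cap K$, write $x = h/g$ with $h \in E[X]$ and $g \in N(\st)$, so that $xc(g) \subseteq E$; since $c(g)^{\stf} = D^{\stf}$ implies (by (2)) $c(g) \not\subseteq Q$ for every $Q \in \QMax^{\stf}(D)$, one gets $x \in ED_Q$ for each such $Q$, i.e., $x \in E^{\stt}$. Conversely, given $x \in E^{\stt}$, the conductor $J = (E :_D x)$ satisfies $J \not\subseteq Q$ for every quasi-$\stf$-maximal $Q$, so $1 \in J^{\stf}$; choose a finitely generated $L \subseteq J$ with $1 \in L^{\st}$, which forces $L^{\st} = D^{\st}$, and assemble the generators of $L$ into a polynomial $g \in N(\st)$ with $xg \in E[X]$, showing $x \in E\na$. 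The identity $E^{\stt}\na = E\na$ then drops out formally, and (7) is an immediate corollary: $I = I^{\stt} \cap D$ iff $I = (I\na \cap K) \cap D = I\na \cap D$. The main obstacle is (2); everything else reduces to standard Nagata-ring bookkeeping once the quasi-$\stf$- and quasi-$\stt$-maximal spectra have been identified.
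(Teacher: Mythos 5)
The paper itself gives no proof of this lemma---it points to \cite{FH}, \cite{FL3}, and \cite{fhp}---so your proposal has to be measured against the standard development in those sources. Your arguments for (1), (2), (5), (6), and (7) are correct and are essentially that development: in particular, reducing (2) to the two observations that every quasi-$\stf$-ideal is a quasi-$\stt$-ideal (via $I^{\stt}\subseteq I^{\stf}$) and that every proper quasi-$\stt$-ideal lies under some quasi-$\stf$-maximal ideal is exactly right, and your proof of (6) via the conductor $(E:_D x)$ and the polynomial assembled from the generators of $L$ is complete.

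The gap is in (3). You flag the ``key check'' as $P[X]\cap N(\st)=\emptyset$ for $P\in\QMax^{\stf}(D)$, but that is the easy half; the substantive content of (3) is surjectivity, i.e., that every maximal ideal of $\na$ is \emph{extended} from a quasi-$\stf$-maximal ideal of $D$. Your sentence ``any $M\in\Max(\na)$ contracts to an ideal of $D$ disjoint from $N(\st)$, which translates to a proper quasi-$\stf$-ideal; maximality in $D$ matches maximality in $\na$'' asserts this rather than proves it. Two things are left open: a prime $Q$ of $D[X]$ maximal with respect to $Q\cap N(\st)=\emptyset$ could a priori be an ``upper'', strictly containing $(Q\cap D)[X]$; and it is not shown that $M\cap D$ is even a quasi-$\stf$-ideal. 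The standard repair is the degree-shifting/content-combination argument: if the ideal $J=\sum_{f\in Q}c(f)$ had $J^{\stf}=D^{\stf}$, then by the finite-type property there would be $f_1,\dots,f_n\in Q$ with $\bigl(c(f_1)+\cdots+c(f_n)\bigr)^{\st}=D^{\st}$, and the polynomial $f_1+X^{\deg f_1+1}f_2+\cdots$ would lie in $Q\cap N(\st)$, a contradiction; hence $J^{\stf}\cap D$ is a proper quasi-$\stf$-ideal, contained in some $P\in\QMax^{\stf}(D)$, so $Q\subseteq J[X]\subseteq P[X]$ and maximality of $Q$ forces $Q=P[X]$. Without this step (or an equivalent) item (3) is not proved. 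Note, however, that your (4) and both inclusions in (6) use only the easy half together with facts you do establish, so the rest of the proposal is unaffected.
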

%\begin{proof} We note that (7) follows easily from: $I^{\stt}\cap D = I\na \cap K \cap D=I\na \cap D$. {\rc Include this?  Proofs of any other parts?}
%\end{proof}

\begin{lemma} \label{l:p*md} Let $\st$ be a semistar operation on $D$.  \begin{enumerate}
\item The following statements are equivalent. \begin{enumerate}
\item $D$ is a P$\st$MD.
\item $\na$ is a Pr\"ufer domain.
\item The ideals of $\na$ are extended from ideals of $D$.
\item $D_P$ is a valuation domain for each $P\in \QMax^{\stf}(D)$.
\end{enumerate}
\item Assume that $D$ is a P$\st$MD. Then: \begin{enumerate}
\item $\stt=\stf$ and $($hence$)$ $D^{\st}=D^{\stt}$.
\item The map $\QSpec^{\stf}(D) \to \Spec(\na)$, $P \mapsto P\na$, is a bijection with inverse map $Q \mapsto Q\cap D$.
\item Finitely generated ideals of $\na$ are extended from finitely generated ideals of $D$.
\end{enumerate}
\end{enumerate}
\end{lemma}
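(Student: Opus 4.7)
The plan is to handle part (1) via the cycle of implications (a)$\Rightarrow$(d)$\Rightarrow$(b)$\Rightarrow$(c)$\Rightarrow$(a), and then to derive part (2) from the structural facts established along the way.

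For (a)$\Rightarrow$(d) I would take a quasi-$\stf$-maximal $P$ and a finitely generated $F \subseteq P$: the $\stf$-invertibility of $F$ gives $(FF^{-1})^{\stf} = D^{\stf}$, and since $P^{\stf}$ is a proper $\stf$-ideal, $FF^{-1} \not\subseteq P$, which forces $FD_P$ to be invertible in $D_P$; the local ring $D_P$ is then Pr\"ufer, hence a valuation domain. For (d)$\Rightarrow$(b) I would use Lemma~\ref{l:nagata}(3) to identify each maximal ideal of $\na$ as $P\na$ with $P \in \QMax^{\stf}(D)$, observe that the localization $\na_{P\na}$ coincides with $D_P(X)$, and recall that $D_P(X)$ is a valuation domain whenever $D_P$ is; this makes $\na$ locally valuation and therefore Pr\"ufer. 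For (b)$\Rightarrow$(c) I would take a finitely generated $J = (f_1, \ldots, f_n)\na$ with $f_i \in D[X]$ and verify, at each $\na_{P\na} = D_P(X)$, a content identity $f_i D_P(X) = c(f_i) D_P(X)$, which gives $J \cdot \na_{P\na} = (J \cap D) \cdot \na_{P\na}$ and therefore $J = (J \cap D)\na$; arbitrary ideals, being directed unions of finitely generated ones, then extend as well. For (c)$\Rightarrow$(a), given $F \in \f(D)$, the inclusion $F^{-1}\na \subseteq (F\na)^{-1}$ is automatic, and one leverages (c) — via the obstacle discussed below — to show $F\na$ is invertible; contracting with Lemma~\ref{l:nagata}(6) yields $(FF^{-1})^{\stt} = D^{\stt}$, and since no quasi-$\stf$-maximal ideal can contain $FF^{-1}$, one concludes $(FF^{-1})^{\stf} = D^{\stf}$.

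Part (2)(a), the equality $\stt = \stf$, follows from (1): for any $E \in \Fb(D)$ and any finitely generated $F \subseteq E$, the $\stf$-invertibility of $F$ together with Lemma~\ref{l:nagata}(6) yields $F^{\st} = F^{\stt}$, so taking the union over such $F$ gives $E^{\stf} \subseteq E^{\stt}$; the reverse inclusion is immediate from the definitions and $D^{\stt} = D^{\stf}$. Part (2)(b) follows because injectivity is Lemma~\ref{l:nagata}(4), while surjectivity uses (1)(c): every prime $Q$ of $\na$ equals $(Q \cap D)\na$, and $Q \cap D$ is a quasi-$\stt$-prime, hence by (2)(a) a quasi-$\stf$-prime. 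Part (2)(c) is precisely the finite-type portion of the proof of (1)(b)$\Rightarrow$(c).

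The principal obstacle is the step (c)$\Rightarrow$(a), where $\stf$-invertibility in $D$ must be extracted from an extension property of $\na$ without invoking P$\st$MD properties circularly. The cleanest route is to prove (c)$\Rightarrow$(d) directly: if every ideal of $\na$ extends from $D$, then each $\na_{P\na} = D_P(X)$ inherits the extension property, which compels $D_P$ to be a valuation domain (by testing on two-generator ideals and using that such ideals are forced to be principal in $D_P(X)$). The cycle then closes via the chain (d)$\Rightarrow$(b)$\Rightarrow$(a) already set up, and this same argument — applied to a finitely generated $F\na$ rather than to arbitrary ideals — gives the version of (2)(c) that is actually needed in the application.
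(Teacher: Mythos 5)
The paper does not actually prove this lemma; it is a compendium of facts quoted from \cite{FH}, \cite{FL3}, and \cite{fhp}, so your attempt is being judged on its own terms. Your architecture for part (1) is sensible --- (a)$\Rightarrow$(d) by localizing $\stf$-invertibility at a quasi-$\stf$-maximal ideal, (d)$\Rightarrow$(b) via $\na_{P\na}=D_P(X)$, (b)$\Rightarrow$(c) via the content identity $fD_P(X)=c(f)D_P(X)$, and (c)$\Rightarrow$(d) in the style of Arnold's theorem --- but as written the cycle never returns to (a). Your closing claim that the chain ``(d)$\Rightarrow$(b)$\Rightarrow$(a)'' was ``already set up'' is false: no implication into (a) is ever established. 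The only candidate is your sketch of (c)$\Rightarrow$(a), and there the decisive step is unsupported: from ``$F\na$ is invertible'' together with the one inclusion $F^{-1}\na\subseteq(\na:F\na)$ you can only contract $FF^{-1}\na\subseteq\na$ to get $(FF^{-1})^{\stt}\subseteq D^{\stt}$, which is the trivial direction. To conclude $(FF^{-1})^{\stt}=D^{\stt}$ you need the equality $(\na:F)=(D:F)\na$; this is true for finitely generated $F$ (write $(D:F)=\bigcap_i a_i^{-1}D$ and use flatness of $\na$ over $D$), but it must be said, since the analogous statement for arbitrary $E$ fails (Lemma~\ref{l:ext}(2) and the example following it). Alternatively, and more cleanly, prove (d)$\Rightarrow$(a) directly: for $F\in\f(D)$ and $M\in\QMax^{\stf}(D)$, $FD_M$ is principal and $(D:F)D_M=(D_M:FD_M)$, so $FF^{-1}\nsubseteq M$; since every proper quasi-$\stf$-ideal lies under a quasi-$\stf$-maximal one, $(FF^{-1})^{\stf}=D^{\stf}$.

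The more serious gap is (2)(a). The inclusion $E^{\stt}\subseteq E^{\stf}$ is automatic from the preliminaries, so the entire content is $F^{\st}\subseteq F^{\stt}$ for finitely generated $F$, and you dispose of this with the single phrase that ``the $\stf$-invertibility of $F$ together with Lemma~\ref{l:nagata}(6) yields $F^{\st}=F^{\stt}$.'' That is not an argument, and the obvious attempts are circular: to pass from $F^{\st}F^{-1}\subseteq(FF^{-1})^{\st}=D^{\st}$ to $F^{\st}\subseteq FD_M$ one needs $D^{\st}\subseteq D_M$ for each $M\in\QMax^{\stf}(D)$, i.e.\ $D^{\st}=D^{\stt}$, which is precisely the consequence of (2)(a) you are trying to prove; the example after Lemma~\ref{l:ext} shows that $D^{\st}\ne D^{\stt}$ can occur even for a finite-type semistar operation, so the P$\st$MD hypothesis must be used in a non-formal way. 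This equality $\stf=\stt$ is the genuinely deep part of the whole lemma (a theorem of Fontana--Jara--Santos) and cannot be waved through; note also that your justification of the ``reverse inclusion'' invokes $D^{\stt}=D^{\stf}$, which again presupposes the result. The remaining items are fine: (2)(b) follows from Lemma~\ref{l:nagata}(4) plus (1)(c) once (2)(a) is in hand, and (2)(c) is indeed the finitely generated case of your (b)$\Rightarrow$(c) argument.
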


%%%%%%%%%%%%%%%%%%%%%%%%%%%%%%%%%%%%%%%%%%%%%
%%%%%%%%%%%%%%%%%%%%%%%%%%%%%%%%%%%%%%%%%%%%%
%%%%%%%%%%%%%%              SECTION TWO                %%%%%%%%%%%%%%
%%%%%%%%%%%%%%%%%%%%%%%%%%%%%%%%%%%%%%%%%%%%%
%%%%%%%%%%%%%%%%%%%%%%%%%%%%%%%%%%%%%%%%%%%%%

\section{Idempotence}

We begin with our basic definition.

\begin{definition} \label{d:idemp} Let $\st$ be a semistar operation on $D$.  An element $E \in \Fb(D)$ is said to be $\st$-idempotent if $E^{\st}=(E^2)^{\st}$.
\end{definition}

Our primary interest will be in (nonzero) $\st$-idempotent \emph{ideals} of $D$.  Let $\st$ be a semistar operation on $D$, and let $I$ be a nonzero ideal of $D$.  It is well known that $I^{\st}\cap D$ is a quasi-$\st$-ideal of $D$.  (This is easy to see: we have $$(I^{\st}\cap D)^{\st} \subseteq I^{\st \st}=I^{\st}=(I\cap D)^{\st}\subseteq (I^{\st}\cap D)^{\st},$$ and hence $I^{\st}=(I^{\st}\cap D)^{\st}$; it follows that $I^{\st}\cap D=(I^{\st}\cap D)^{\st}\cap D$.)  It therefore seems natural to call $I^{\st}\cap D$ the \emph{quasi-$\st$-closure} of $I$.  If we also call $I$ \emph{$\st$-proper} when $I^{\st}\subsetneq D^{\st}$, then it is easy to see that $I$ is $\st$-proper if and only if its quasi-$\st$-closure is a proper quasi-$\st$-ideal. Now suppose that $I$ is $\st$-idempotent. Then $$(I^{\st}\cap D)^{\st}=I^{\st}=(I^2)^{\st}=((I^{\st})^2)^{\st}=(((I^{\st}\cap D)^{\st})^2)^{\st}=((I^{\st}\cap D)^2)^{\st},$$ whence $I^{\st}\cap D$ is a $\st$-idempotent quasi-$\st$-ideal of $D$. A similar argument gives the converse.  Thus a ($\st$-proper) nonzero ideal is $\st$-idempotent if and only if its quasi-$\st$-closure is a (proper) $\st$-idempotent quasi-$\st$-ideal.

Our study of idempotence in Pr\"ufer domains and P$\st$MDs involves the notions of sharpness and branchedness. We recall some notation and terminology.

Given   an integral domain $D$ and a prime ideal  $P \in \Spec(D)$, set
$$
\begin{array}{rl}
\nabla(P) :=& \hskip -8pt  \{ M \in \Max(D) \mid M \nsupseteq P\}\,{\rm and}\\
%\Delta (P) :=& \hskip -7pt \nabla(P)  \cup \{P\}\,,\\
\Theta(P) :=& \hskip -7pt  \bigcap \{ D_M \mid M \in \nabla(P) \}\,.
\end{array}
$$

We say that $P$  is {\it sharp}  %(or, has the {\it \#-property})
if $\Theta(P) \nsubseteq D_P$  (see \cite[Lemma 1]{G-66} and \cite[Section 1 and Proposition 2.2]{FHL-10}). The domain $D$ itself is \emph{sharp} (\emph{doublesharp}) if every maximal (prime) ideal of $D$ is sharp.
 (Note that a Pr\"{u}fer domain $D$ is doublesharp if and only if each overring of $D$ is sharp \cite[Theorem 4.1.7]{FHP97}.)
 Now let $\st$ be a semistar operation on $D$.  %Then, given a semistar operation $\star$ on $D$ and
 Given a prime ideal  $P \in \QSpec^{\stf}(D)$, set
$$
\begin{array}{rl}
\nabla^{\stf}(P) :=& \{ M \in \QMax^{\stf}(D) \mid M \nsupseteq P\}\, {\rm and}\\
%\Delta^{\stf}(P) :=& \nabla^{\stf}(P)  \cup \{P\}\,,\\
\Theta^{\stf}(P) :=&  \bigcap \{ D_M \mid M \in \nabla^{\stf}(P) \}\,.
\end{array}
$$
%As in the previous section, we use the simpler notation $\nabla^{\stf}$ (respectively, $\Delta^{\stf}$; $\Theta^{\stf}$)  when no possible confusion can arise from the omission of the prime ideal $P$.

Call $P$ {\it $\stf$-sharp} %(or, {\it $\stf$-\#})
if $\Theta^{\stf}(P) \nsubseteq D_P$.
For example, if $\star =d$  is the identity,  then the $\stf$-sharp property  coincides with the sharp property. We then say that $D$ is $\stf$-(double)sharp if  each quasi-$\stf$-maximal (quasi-$\stf$-prime) ideal of $D$ is $\stf$-sharp. (For more on sharpness, see \cite{G-64}, \cite{G-66}, \cite{GH-67}, \cite[page 62]{FHP97}, \cite{FHL-10}, \cite[Chapter 2, Section 3]{FHL-13}, and \cite{fhp}.)

Recall that a prime ideal $P$ of a ring is said to be \emph{branched} if there is a $P$-primary ideal distinct from $P$.  Also, recall that the domain $D$ has \emph{finite character} if each nonzero ideal of $D$ is contained in only finitely many maximal ideals of $D$.

%{\gc Finally, we repeat a definition from the introduction: if $\st$ is a semistar operation on $D$ and $I \in \F(D)$, then $I$ is said to be \emph{ $\st$-idempotent} if $I^{\st}=(I^2)^{\st}$.}

We now prove a lemma that discusses the transfer of ideal-theoretic properties between $D$ (on which a semistar operation $\st$ has been defined) and its associated Nagata ring.

\begin{lemma} \label{l:branched-idempotent}
Let $\st$ be a semistar operation on $D$.
%Let $D$ be a P$\star$MD.
\begin{enumerate}
\item  Let $E \in \Fb(D)$.  Then $E$ is $\stt$-idempotent if and only if $E{\rm Na}(D,\st)$ is idempotent.  In particular, if $D$ is a P$\st$MD, then $E$ is $\stf$-idempotent if and only if $E{\rm Na}(D,\st)$ is idempotent.
%\item Let $I$ be a quasi-$\stf$-ideal of $D$, and let $P$ be a minimal prime of $I$ (so that $P$ is a quasi-$\stf$-prime).  Then $I$ is $P$-primary in $D$ if and only if $I\na$ is $P\na$-primary in $\na$.
\item Let $P$ be a quasi-$\stt$-prime of $D$ and $I$ a nonzero ideal of $D$. Then: \begin{enumerate}
%Let $I$ be a quasi-$\stf$-ideal of $D$.  Then $I$ is $\stf$-idempotent in $D$ (i.e., $I^{\stf} = (I^2)^{\stf}$) if and only if $I\Na(D,\star)$ is idempotent in $\Na(D,\star)$. {\gc I think we can prove that for $\st$ of finite type, any ideal $I$ is $\stt$-idempotent $\lra$ $I\Na(D,\st)$ is idempotent.  Then (1) follows easily since $\st=\stt$ when $D$ is a P$\st$MD.}
\item  $I$ is $P$-primary in $D$ if and only if $I$ is a quasi-$\stt$-ideal of $D$ and $I\na$ is $P\na$-primary in $\na$.
%\item If $I$ is a quasi-$\stt$-ideal of $D$ and $I\na$ is $P\na$-primary, then $I$ is $P$-primary.
\item $P$ is branched in $D$ if and only if $P\Na(D,\star)$ is branched in $\Na(D,\star)$.
\end{enumerate}
\item  $D$ has $\stf$-finite character $($i.e., each nonzero element of $D$ belongs to only finitely many $($possibly zero$)$ $M \in \QMax^{\stf}(D))$  if and only if $\Na(D,\star)$ has finite character.% {\gc  I think that this is not true in general but works in the P$\st$MD case}
\item  Let $I$ be a quasi-$\stt$-ideal of $D$.  Then $I$ is a radical ideal if and only if $I\Na(D,\star)$ is a radical ideal of $\Na(D,\star)$.
\item Assume that $D$ is a P$\st$MD.  Then: \begin{enumerate}
\item If $P \in \QSpec^{\stf}(D)$, then $P$ is $\stf$-sharp if and only if $P{\rm Na}(D,\st)$ is sharp  in ${\rm Na}(D,\st)$.
\item $D$ is $\stf$-$($double$)$sharp if and only if $\na$ is $($double$)$sharp.
\end{enumerate}
\end{enumerate}
\end{lemma}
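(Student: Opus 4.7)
The plan is to transfer each of the five properties between $D$ and $\na$ via extension/contraction of ideals, leveraging Lemmas 1.1 and 1.2 throughout. Part (1) is immediate from Lemma 1.1(6): applied to both $E$ and $E^2$, it yields $E^{\stt}\na = E\na$ and $(E^2)^{\stt}\na = E^2\na$, and since the map $F \mapsto F\na$ is injective on $\stt$-closed modules (because $F^{\stt} = F\na \cap K$), one concludes $E^{\stt} = (E^2)^{\stt}$ iff $E\na = E^2\na$; the P$\st$MD addendum is handled by $\stt = \stf$ (Lemma 1.2(2a)). Part (4) follows from the fact that $I = I\na \cap D$ gives an injection $D/I \hookrightarrow \na/I\na \cong (D/I)[X]_{\overline{N(\st)}}$, together with the stability of reducedness under polynomial extensions, localizations, and passage to subrings.

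Parts (2) and (3) all rest on the observation that $P[X] \cap N(\st) = \emptyset$ for every quasi-$\stt$-prime $P$: indeed, $s \in P[X] \cap N(\st)$ would force $c(s) \subseteq P \subseteq M$ for some $M \in \QMax^{\stf}(D)$, contradicting $c(s)^{\st} = D^{\st}$. For part (2a), the forward direction combines the standard fact that $I[X]$ is $P[X]$-primary in $D[X]$ with preservation of primary-ness under localization at a multiplicative set disjoint from $P[X]$, giving that $I\na$ is $P\na$-primary; and if $as \in I$ with $a \in D$ and $s \in N(\st)$, some coefficient $s_i$ of $s$ lies outside $P$ (as $c(s) \not\subseteq P$), so $as_i \in I$ combined with primary-ness gives $a \in I$, hence $I = I\na \cap D$. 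The converse pulls primary-ness back through the bijection $P \leftrightarrow P\na$ of Lemma 1.1(4). Part (2b) is then immediate from (2a): a $P$-primary $Q \neq P$ extends to a $P\na$-primary $Q\na \neq P\na$, and conversely the contraction $J \cap D$ of a $P\na$-primary $J \neq P\na$ is a $P$-primary quasi-$\stt$-ideal distinct from $P$ (else $J \supseteq (J \cap D)\na = P\na$). For part (3), a Dedekind--Mertens computation shows that a nonzero $g/s \in \na$ (with $g \in D[X]$ and $s \in N(\st)$) lies in $P\na$ iff $c(g) \subseteq P$; hence the maximals of $\na$ containing $g/s$ correspond to the quasi-$\stf$-maximals of $D$ containing any fixed nonzero coefficient of $g$, yielding the finite-character equivalence.

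For part (5a), in the P$\st$MD setting I would first establish the identities $\na_{P\na} = D_P(X)$ and $\na_{M\na} = D_M(X)$ (for $M \in \nabla^{\stf}(P)$), using that $N(\st) \subseteq D[X] \setminus PD[X]$, together with the standard Nagata identities $D_P(X) \cap K = D_P$ and $D_M(X) \cap K = D_M$. The forward direction is then immediate: a $K$-witness $u$ for $\stf$-sharpness of $P$ lies in each $D_M \subseteq \na_{M\na}$ but outside $D_P = \na_{P\na} \cap K$, so it also witnesses sharpness of $P\na$. The main obstacle is the reverse direction, because an element $u \in \Theta(P\na) \setminus \na_{P\na}$ is \emph{a priori} a rational function, not a constant in $K$, and must be replaced by a $K$-witness. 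The key input is the Pr\"ufer structure of $\na$: writing $u = f/g$ with $f, g \in D[X]$, one checks that in any valuation overring $V$ of $D$, $u \in V(X)$ iff the invertible fractional ideal $c(f) c(g)^{-1}$ lies in $V$; applying this to $V = D_M$ (for $M \in \nabla^{\stf}(P)$) and $V = D_P$ translates the hypothesis into $c(f) c(g)^{-1} \subseteq \Theta^{\stf}(P)$ but $c(f) c(g)^{-1} \not\subseteq D_P$, so any element of $c(f) c(g)^{-1} \setminus D_P$ is a $K$-witness for $\stf$-sharpness. Finally, (5b) is a formal consequence of (5a) via the containment-preserving bijections $\QMax^{\stf}(D) \leftrightarrow \Max(\na)$ of Lemma 1.1(3) and $\QSpec^{\stf}(D) \leftrightarrow \Spec(\na)$ of Lemma 1.2(2b).
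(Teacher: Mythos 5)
Your proposal is correct, and parts (1)--(4) run essentially along the paper's lines: (1) is the same two-line computation from Lemma~\ref{l:nagata}(6); your treatment of (2) via the disjointness $P[X]\cap N(\st)=\emptyset$ and coefficientwise argument for $I\na\cap D\subseteq I$ is a minor variant of the paper's use of $ID[X]_{PD[X]}\cap D[X]=ID[X]$; your (3) is the paper's content argument; and your (4) replaces the paper's explicit radical computation with $g\psi$ by the cleaner observation that $D/I\hookrightarrow \na/I\na\cong (D/I)[X]_{\overline{N(\st)}}$ and reducedness passes through polynomial extension, localization, and subrings --- same substance. The genuine divergence is in (5a). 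The paper converts sharpness into the finitely-generated-ideal criterion on both sides (citing \cite[Proposition 3.1]{fhp} for $\stf$-sharpness and \cite[Corollary 2]{GH-67} for the Pr\"ufer domain $\na$, together with Lemma~\ref{l:p*md}(2c) to descend a finitely generated ideal of $\na$ to one of $D$), so that the transfer is a purely formal matching of finitely generated ideals under extension. You instead argue directly from the definition $\Theta^{\stf}(P)\nsubseteq D_P$, using the identifications $\na_{M\na}=D_M(X)$ and $\na_{P\na}=D_P(X)$ and the Gauss-valuation fact that $f/g\in V(X)$ iff $c(f)c(g)^{-1}\subseteq V$ for a valuation overring $V$ (with $c(g)$ finitely generated, so that $(D:c(g))$ localizes correctly); this correctly handles the only delicate point, namely replacing a rational-function witness in $\Theta(P\na)\setminus \na_{P\na}$ by an element of $K$. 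Your route is more self-contained (no appeal to the external sharpness characterizations) at the cost of the content/Nagata-function-ring computation; the paper's route is shorter given the cited results. Both are valid, and (5b) is formal from (5a) either way.
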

\begin{proof}
(1) We use Lemma~\ref{l:nagata}(6). If $E\na$ is idempotent, then $E^{\stt}=E\na \cap K=E^2\na \cap K=(E^2)^{\stt}$.  Conversely, if $E$ is $\stt$-idempotent, then $(E\na)^2=E^2\na=(E^2)^{\stt}\na=E^{\stt}\na=E\na$. The ``in particular'' statement follows because $\stf=\stt$ in a P$\st$MD (Lemma~\ref{l:p*md}(2a)).
%Recall that  $E^{\stt}=\bigcap \{ED_M \mid M \in \QMax^{\stf}(D)\}$ for each $E \in \overline{F}(D)$. Therefore, using the description of $\QMax^{\stf}(D)$ given in Lemma \ref{lemma-Na}(4) {\gc Fix reference}, we have $I^{\stt}=(I^2)^{\stt}
%\; \Leftrightarrow \; ID_M=I^2D_M$ for each $M \in \QMax^{\stf}(D) \; \Leftrightarrow \;  ID_M(X)=I^2D_M(X)$ for each $M \in \QMax^{\stf}(D) \; \Leftrightarrow \;  I\Na(D,\star)_N=I^2\Na(D,\star)_N$ for each $N \in \Max(\Na(D,\star)) \; \Leftrightarrow \; I\Na(D,\star)=(I\Na(D,\star))^2$.  The ``in particular'' statement follows since $\stf=\stt$ when $D$ is a P$\st$MD.
%Recall that  $E^{\stf}=E^{\stt}=\bigcap \{ED_M \mid M \in \QMax^{\stf}(D)\}$ for each $E \in \overline{F}(D)$. Therefore, by using the description of $\QMax^{\stf}(D)$ given in Lemma \ref{lemma-Na}(4), we have $I^{\stf}=(I^2)^{\stf}
%\; \Leftrightarrow \; ID_M=I^2D_M$ for each $M \in \QMax^{\stf}(D) \; \Leftrightarrow \;  ID_M(X)=I^2D_M(X)$ for each $M \in \QMax^{\stf}(D) \; \Leftrightarrow \;  I\Na(D,\star)_N=I^2\Na(D,\star)_N$ for each $N \in \Max(\Na(D,\star)) \; \Leftrightarrow \; I\Na(D,\star)=(I\Na(D,\star))^2$.

(2) (a) Suppose that $I$ is $P$-primary. Then $ID[X]$ is $PD[X]$-primary.  Since $P$ is a quasi-$\stt$-prime of $D$, $P\na$ is a prime ideal of $\na$ (Lemma~\ref{l:nagata}(4)), and then, since $\na$ is a quotient ring of $D[X]$, $I\na$ is $P\na$-primary in $\na$.  Also, again using the fact that $ID[X]$ is $PD[X]$-primary (along with Lemma~\ref{l:nagata}(6)), we have $$I^{\stt} \cap D=I\na \cap D \subseteq ID[X]_{PD[X]} \cap D[X] \cap D =ID[X] \cap D=I,$$ whence $I$ is a quasi-$\stt$-ideal of $D$.  Conversely, assume that $I$ is a quasi-$\stt$-ideal of $D$ and that $I\na$ is $P\na$-primary. Then for $a\in P$, there is a positive integer $n$ for which $a^n \in I\na \cap D=I^{\stt} \cap D=I$. Hence $P={\rm rad}(I)$.  It now follows easily that $I$ is $P$-primary. %It is well known that $I$ is $P$-primary if and only if $ID[X]$ is $PD[X]$-primary.  Since $P$ is a quasi-$\stf$-prime of $D$, $P\na$ is a prime ideal of $\na$.  Then, since $\na$ is a quotient ring of $D[X]$, it follows that $I$ is $P$-primary if and only if $I\na$ is $P\na$-primary.

(b) Suppose that $P$ is branched in $D$. Then there is a $P$-primary ideal $I$ of $D$ distinct from $P$, and $I\na$ is $P\na$-primary by (a).  Also by (a), $I$ is a quasi-$\stt$-ideal, from which it follows that $I\na\ne P\na$.  Now suppose that $P\na$ is branched and that $J$ is a $P\na$-primary ideal of $\na$ distinct from $P\na$.  Then it is straightforward to show that $J\cap D$ is distinct from $P$ and is $P$-primary.
%Assume that $I$ is $P$-primary.  Then $ID[X]$ is $PD[X]$-primary in $D[X]$, and, since $\na$ is a quotient ring of $D[X]$, $I\na$ is $P\na$-primary in $\na$.  Conversely, if $I\na$ is $P\na$-primary, it is clear that $P={\rm rad}(I)$.  Then, if $a,b \in D$ are such that $ab \in I$ and $b \notin P$, then $b \notin P\na$, whence $a \in I\na \cap D=I^{\stt} \cap D \subseteq I^{\stf} \cap D=I$.
%{\mc Let $H \subsetneq M$ be an $M$-primary ideal of $D$.  Then $H[X]$ is $M[X]$-primary in $D[X]$, and hence $H\Na(D,\star)$ is $M\Na(D,\star)$-primary in $\Na(D,\star)$.  It is clear that $H\Na(D,\star) \subsetneq M\Na(D,\star)$.  Conversely,  let $J \subsetneq M\Na(D,\star)$ be $M\Na(D,\star)$-primary.  Since $J$ is extended from $D$ (Lemma~\ref{Na=Kr}), we must have $J=(J\cap D)\Na(D,\star)$, from which it follows easily that $J \cap D$ is $M$-primary.}

%Note that $Q$ is a $M$-primary  ideal of $D$ if and only if $Q\Na(D,\star)$ is a $M\Na(D,\star)$-primary ideal of   $\Na(D,\star)$, since  $Q$ is a $M$-primary  ideal of $D$ if and only if  $QD_M$ is a $MD_M$-primary  ideal of $D_M$ and $\Na(D,\star)_{M{\Small \Na}(D,\star)}/Q\Na(D,\star)_{M{\Small\Na}(D,\star)}$ $ = D_M(X)/QD_M(X) $ $= (D_M/QD_M)(X)$.
%%Since $\Na(D,\star)$ is a Pr\"ufer domain, the conclusion follows from (1).

(3) Let $\psi$ be a nonzero element of $\na$, and let $N$ be a maximal ideal with $\psi \in N$.  Then $\psi \na=f\na$ for some $f \in D[X]$, and writing $N=M\na$ for some $M \in \QMax^{\stf}(D)$ (Lemma~\ref{l:nagata}(3)), we must have $f \in MD[X]$ and hence $c(f) \subseteq M$.  Therefore, if $D$ has finite $\stf$-character, then $\na$ has finite character. The converse is even more straightforward.
%Suppose that $D$ has finite $\stf$-character, and let $\psi$ be a nonzero element of $\na$.  Then $\psi \na =f\na$ for some $f \in D[X]$.  If $N$ is a maximal ideal of $\na$ with $f \in N$, write $N=M\na$ for some $M\in \QMax^{\stf}(D) (Lemma~\ref{l:nagata}(3)).  Then $f\in MD[X]$, and hence $c(f)\subseteq M$.

%This is straightforward in view of Lemma~\ref{l:nagata}(3).

(4) Suppose that $I$ is a radical ideal of $D$, and let $\psi^n \in I{\rm Na}(D,\st)$ for some $\psi \in {\rm Na}(D,\st)$ and positive integer $n$.  Then there is an element $g \in N(\st)$ with ($g\psi^n$ and hence) $(g\psi)^n \in ID[X]$.  Since $ID[X]$ is a radical ideal of $D[X]$, $g\psi \in ID[X]$ and we must have  $\psi \in I{\rm Na}(D,\st)$.  Therefore, $I{\rm Na}(D,\st)$ is a radical ideal of ${\rm Na}(D,\st)$.
%Note that $P \in \QSpec^{\stf}(D)$ contains $I$ if and only if $P\Na(D,\star)$ contains $I\Na(D,\star)$.
%Suppose that $I$ is a radical $\stf$-ideal of $D$, and let $\psi \in \Na(D,\star)$ satisfy $\psi \in \bigcap \{P\Na(D,\star) \mid P \in \QSpec^{\stf}(D), P \supseteq I\} ( =\mbox{\rm rad}(I\Na(D,\star))$ by Lemma \ref {Na=Kr}(2)).
%Now $g\psi \in D[X]$ for some $g \in D[X]$ with $c(g)^{\stf}=D^{\stf}$.  Then $g\psi \in \bigcap \{P[X] \mid P \in \QSpec^{\stf}(D), P \supseteq I\} = \mbox{\rm rad}(I)[X]=I[X]$, whence $\psi \in I\Na(D,\star)$.  Therefore $I\Na(D,\star)$ is a radical ideal.
The converse follows easily from  the fact that $I\Na(D,\star) \cap D = I^{\stt} \cap D=I$ (Lemma~\ref{l:nagata}(7)).

(5) (a) This is part of \cite[Proposition 3.5]{fhp}, but we give here a proof more in the spirit of this paper. Let $P\in \QSpec^{\stf}(D)$.  If $P$ is $\stf$-sharp, then by \cite[Proposition 3.1]{fhp} $P$ contains a finitely generated ideal $I$ with $I \nsubseteq M$ for all $M \in \nabla^{\stf}(P)$, and, using the description of $\Max(\na)$ given in Lemma~\ref{l:nagata}(3), $I\na$ is a finitely generated ideal of $\na$ contained in $P\na$ but in no element of $\nabla(P\na)$. Therefore, $P\na$ is sharp in the Pr\"{u}fer domain $\na$.
 For the converse, assume that $P\na$ is sharp in $\na$. Then $P\na$ contains a finitely generated ideal $J$ with $J \subseteq P\na$ but $J \nsubseteq N$ for $N \in \nabla(P\na)$ \cite[Corollary 2]{GH-67}.  Then $J=I\na$ for some finitely generated ideal $I$ of $D$ (necessarily) contained in $P$ by Lemma~\ref{l:p*md}(2c), and it is easy to see that $I \nsubseteq M$ for $M \in \nabla^{\stf}(D)$. Then by \cite[Proposition 3.1]{fhp}, $P$ is $\stf$-sharp.
 Statement (b) follows easily from (a) (using Lemma~\ref{l:p*md}).
%The result then follows easily from Lemma~\ref{l:p*md} {\gc more specific?}.
%{\gc Do we want to give a proof more in the spirit of this paper? Maybe so.  We can use the characterization given in \cite[Proposition 3.1]{fhp}.  Then point out that (finitely generated) ideals of ${\rm Na}(D,\st)$ are extended from (finitely generated) ideals of $D$ (\cite[Lemma 1.4 and its proof]{fhp}).  The result then follows easily.  We can then point out that $D$ is $\stf$-(double)sharp if and only if ${\rm Na}(D,\st)$ is (double)sharp.}
\end{proof}

Let $D$ be an almost Dedekind domain with a non-finitely generated maximal ideal $M$.  Then $M^{-1}=D$, but $M$ is not idempotent (since $MD_M$ is not idempotent in the Noetherian valuation domain $D_M$). Our next result shows that this cannot happen in a sharp Pr\"ufer doman.

%%%%%%%% THEOREM \label{t:sharp}
\begin{theorem} \label{t:sharp} Let $D$ be a Pr\"ufer domain.  If $D$ is $(d$-$)$sharp and  $I$ is a nonzero ideal of $D$ with $I^{-1}=D$, then $I$ is idempotent.
\end{theorem}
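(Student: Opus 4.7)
The plan is to prove the contrapositive: assuming $I\neq I^2$, I will produce an element of $I^{-1}\setminus D$, contradicting $I^{-1}=D$. Since $I=I^2$ can be checked locally at maximal ideals and is automatic at any $M\nsupseteq I$ (where $ID_M=D_M$), the assumption forces a maximal $M\supseteq I$ with $ID_M\neq (ID_M)^2$ in the valuation overring $D_M$. A short argument in valuation domains shows that a proper nonzero ideal is idempotent unless it is principal (a principal ideal $(a)$ with $a$ a nonunit has $a\notin(a^2)$, while for a non-principal ideal every element lies in the square by choosing a second generator of sufficiently small value). Hence I may fix $a\in I$ with $ID_M=aD_M$.

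Next I would invoke sharpness of $M$ via \cite[Proposition~3.1]{fhp} to obtain a finitely generated $J_0\subseteq M$ with $J_0\nsubseteq M'$ for every maximal $M'\neq M$, and then replace $J_0$ by $J:=J_0+aD$. This $J$ is still finitely generated, still sits inside $M$, and still avoids every other maximal (it contains $J_0$), but now $a\in J$. Consequently, in the valuation domain $D_M$, the principal ideal $JD_M$ is generated by some $c\in J$ dividing $a$, so $a/c\in D_M$. Since $D$ is Pr\"ufer, $J$ is invertible and $J^{-1}D_M=(JD_M)^{-1}=(1/c)D_M$; choosing $s\in J^{-1}$ whose image generates $(1/c)D_M$ over $D_M$ gives $s\notin D_M$, while for each maximal $M'\neq M$, $JD_{M'}=D_{M'}$ forces $s\in J^{-1}\subseteq D_{M'}$.

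It remains to check that $s\in I^{-1}\setminus D$. Non-membership in $D$ is immediate from $s\notin D_M$; for the inclusion $sI\subseteq D$, I verify it in each $D_{M'}$. At $M'\neq M$ this is trivial since $s,I\subseteq D_{M'}$; at $M$,
\begin{equation*}
sI\subseteq s\cdot aD_M=(sc)(a/c)D_M\subseteq D_M,
\end{equation*}
because $sc\in sJ\subseteq D$ and $a/c\in D_M$. Thus $s\in I^{-1}\setminus D$, contradicting $I^{-1}=D$.

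The step I expect to be the only real obstacle is the compatibility between the sharp-witness and the local generator $a$. An arbitrary finitely generated $J_0\subseteq M$ avoiding the other maximals produces a local generator $c_0$ of $J_0D_M$ whose divisibility relation with $a$ in $D_M$ is uncontrolled; if $c_0$ fails to divide $a$ then the key step $sI\subseteq D_M$ breaks. The decisive trick is the enlargement $J:=J_0+aD$, which ensures the updated local generator divides $a$ in $D_M$ without disturbing the sharp-witness property.
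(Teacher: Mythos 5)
Your overall strategy is the paper's: argue contrapositively, localize at a maximal ideal $M$ where idempotence fails, use sharpness to get a finitely generated $J_0\subseteq M$ avoiding all other maximal ideals, and enlarge it by the element $a$ so that the witness is compatible with $I$ locally at $M$. But there is a genuine gap at the step you treat as routine: the claim that a proper nonzero ideal of a valuation domain is idempotent unless it is principal is false. For a counterexample, let $V$ be a valuation domain with value group $\mathbb{Q}$ and let $Q=\{x\in V: v(x)>1\}$. Then $Q$ is not principal (the set $\{v(x):x\in Q\}$ has no minimum), yet $Q^2=\{x: v(x)>2\}\subsetneq Q$, so $Q$ is not idempotent either. (A rank-two example: the nonmaximal prime of a valuation domain with value group $\mathbb{Z}\oplus\mathbb{Z}$ ordered lexicographically.) Your parenthetical argument breaks down exactly here: given $x\in Q$ and $y\in Q$ with $v(y)<v(x)$, you get $x=y\cdot(x/y)$ with $x/y\in V$, but $x/y$ need not lie in $Q$, so this does not place $x$ in $Q^2$. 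Consequently you cannot, in general, ``fix $a\in I$ with $ID_M=aD_M$,'' and without that the key estimate $sI\subseteq s\cdot aD_M\subseteq D_M$ has no justification, since it requires $ID_M\subseteq JD_M$.

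The repair is the one the paper uses: since $I^2D_M\subsetneq ID_M$, some $a\in I$ satisfies $a\notin I^2D_M$, and comparability of ideals in the valuation domain $D_M$ then gives $I^2D_M\subsetneq aD_M$ --- that is, you control $I^2$ by a principal ideal generated by an element of $I$, rather than $I$ itself. Setting $B:=J_0+aD$, one checks locally that $I^2\subseteq B$, and since $B$ is a proper finitely generated (hence invertible) ideal of the Pr\"ufer domain $D$, $(I^2)^{-1}\supseteq B^{-1}\supsetneq D$; the identity $(D:I^2)=((D:I):I)$ shows $I^{-1}=D$ would force $(I^2)^{-1}=D$, a contradiction. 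Your construction of $s\in B^{-1}\setminus D$ and the local verification go through verbatim with $I^2$ in place of $I$; only the principality lemma needs to be abandoned.
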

%%%%%%%
\begin{proof} Assume that $D$ is sharp.  Proceeding contrapositively, suppose that $I$ is a nonzero, non-idempotent ideal of $D$.   Then, for some maximal ideal $M$ of $D$, $ID_M$ is not idempotent in $D_M$.  Since $D$ is a sharp domain, we may choose a finitely generated ideal $J$ of $D$ with $J \subseteq M$
 but $J \nsubseteq N$ for all maximal ideals $N \ne M$.  Since $ID_M$ is a non-idempotent ideal in the valuation domain $D_M$, there is an element $a \in I$ for which
 $I^2D_M \subsetneq aD_M$.  Let $B:=J+Da$.  Then $I^2D_M \subseteq BD_M$ and,
  for $N \in \Max(D) \setminus \{M\}$, $I^2D_N \subseteq D_N = BD_N$.  Hence $I^2 \subseteq B$.
  Since $B$ is a proper finitely generated ideal, we then have $(I^2)^{-1} \supseteq B^{-1} \supsetneq D$.  Hence $(I^2)^{-1} \ne D$, from which it follows that $I^{-1}\ne D$, as desired.%, since  $D\neq (I^2)^{-1}=(D:I^2) =((D:I):I)$.
\end{proof}

Kang \cite[Proposition 2.2]{K-89} proves that, for a nonzero ideal $I$ of $D$, we always have $I^{-1}{\rm Na}(D,v)=({\rm Na}(D,v)):I)$. This cannot be extended to general semistar Nagata rings; for example, if $D$ is an almost Dedekind domain with non-invertible maximal ideal $M$ and we define a semistar operation $\st$ by $E^{\st}=ED_M$ for $E \in \Fb(D)$, then $(D:M)=D$ and hence $(D:M)\na=\na =D[X]_{M[X]}=D_M(X) \linebreak \subsetneq (D_M:MD_M)D_M(X) =(\na:M\na)$ (where the proper inclusion holds because $MD_M$ is principal in $D_M$).  At any rate, what we really require is the equality $(D^{\st}:E)\na=(\na:E)$ for $E \in \boldsymbol{\overline{F}}(D)$.  In the next lemma, we show that this holds in a P$\st$MD but not in general. The proof of part (1) of the next lemma is a relatively straightforward translation of the proof of \cite[Proposition 2.2]{K-89} to the semistar setting.  In carrying this out, however, we discovered a minor flaw in the proof of \cite[Proposition 2.2]{K-89}. The flaw involves a reference to \cite[Proposition 34.8]{G}, but this result requires that the domain $D$ be integrally closed. To ensure complete transparency, we give the proof in full detail.

\begin{lemma} \label{l:ext} Let $\st$ be a semistar operation on $D$. Then: \begin{enumerate}
\item $(D^{\st}:E){\rm Na}(D,\st)\supseteq ({\rm Na}(D,\st):E)$ for each $E \in \boldsymbol{\overline{F}}(D)$.
\item The following statements are equivalent: \begin{enumerate}
\item $(D^{\st}:E){\rm Na}(D,\st)=({\rm Na}(D,\st):E)$ for each $E \in \boldsymbol{\overline{F}}(D)$.
\item $D^{\st}=D^{\stt}$.
\item $D^{\st} \subseteq {\rm Na}(D,\st)$.
\end{enumerate}
%\item Assume that $D^{\st}=D^{\stt}$, and let $I\in \Fb(D)$.  Then
%$(\na:I)=(\na:I^{\st})=(\na:ID^{\st})$. {\gc Do we care about this?}
%\item Assume that $D^{\st}=D^{\stt}$, and let $I$ be a nonzero ideal of $D$. Then the following statements are equivalent. {\gc Check all of this, including the proof.} \begin{enumerate}
%\item $I^{\st}$ is a divisorial ideal of $D^{\st}$.
%\item $ID^{\st}$ is a divisorial ideal of $D^{\st}$. {\gc (Are $I^{\st}$ and $ID^{\st}$ different here?)}
%\item $I\na$ is a divisorial ideal of $\na$.
%\item {\gc Add: $I^{\st}\na$ is a divisorial ideal of $\na$.?}
%\end{enumerate}
%\item If $D^{\st}=D^{\stt}$ and $I^{\st}$ is divisorial in $D^{\st}$, then $I^{\st}=I^{\stt}$ and $I^{\st}\na=I\na$.
\item $(D^{\stt}:E)\na=(\na:E)$ for each $E \in \boldsymbol{\overline{F}}(D)$.
\item If $D$ is a P$\st$MD, then the equivalent conditions in (2) hold. %$(D^{\st}:I){\rm Na}(D,\st)=({\rm Na}(D,\st):I)$ for each $I \in \boldsymbol{\overline{F}}(D)$.
\end{enumerate}
\end{lemma}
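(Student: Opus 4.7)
The strategy is to attack (1) directly via a content-ideal computation; (3), (2), and (4) then follow formally from (1) combined with the earlier lemmas. The technical content lies entirely in (1).

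For (1), given $\psi \in (\na : E)$, I will write $\psi = f/g$ with $f \in D[X]$ and $g \in N(\st)$; since $g$ is a unit in $\na$, $fE \subseteq \na$. The key intermediate claim is that every $h \in \na \cap K[X]$ satisfies $c(h) \subseteq D^{\stt}$. Granted this, applying it to $h = fe$ for each $e \in E$ gives $e \cdot c(f) \subseteq D^{\stt}$, so $c(f) \subseteq (D^{\stt}:E) \subseteq (D^{\st}:E)$ (using the general inclusion $D^{\stt} \subseteq D^{\st}$); hence $f \in (D^{\st}:E)\na$ and $\psi = f/g \in (D^{\st}:E)\na$. To prove the claim, fix $M \in \QMax^{\stf}(D)$ and write $h = p/q$ with $p \in D[X]$ and $q \in N(\st) = N(\stf)$ (Lemma~\ref{l:nagata}(5)). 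From $c(q)^{\stf} = D^{\stf}$ the hypothesis $c(q) \subseteq M$ would force $D = D^{\stf} \cap D \subseteq M^{\stf} \cap D = M$, impossible; so $c(q) \nsubseteq M$. The module-valued Dedekind--Mertens identity (applied to $q \in D[X]$ and $h \in K[X]$) then gives $c(q)^{m+1}\,c(h) = c(q)^{m}\,c(qh) = c(q)^{m}\,c(p) \subseteq D$ for $m = \deg h$, and picking $a \in c(q)^{m+1} \setminus M$ forces $c(h) \subseteq a^{-1}D \subseteq D_M$. Intersecting over $M \in \QMax^{\stf}(D)$ and invoking the defining formula for $\stt$ yields $c(h) \subseteq \bigcap_M D_M = D^{\stt}$.

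For (3), the inclusion $\supseteq$ is (1) applied with $\stt$ in place of $\st$ (since $\Na(D,\stt) = \na$ by Lemma~\ref{l:nagata}(5)); the reverse inclusion follows because $D^{\stt} = D\na \cap K \subseteq \na$ by Lemma~\ref{l:nagata}(6), so $(D^{\stt}:E) \subseteq (\na:E)$ and hence $(D^{\stt}:E)\na \subseteq (\na:E)$. For (2): (a)$\Rightarrow$(c) by specializing $E = D$ (which gives $D^{\st}\na = \na$, hence $D^{\st} \subseteq \na$); (c)$\Rightarrow$(b) because $D^{\st} \subseteq \na \cap K = D^{\stt}$ combined with $D^{\stt} \subseteq D^{\st}$; and (b)$\Rightarrow$(a) is immediate from (3). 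For (4), Lemma~\ref{l:p*md}(2a) gives $\stf = \stt$ in a P$\st$MD, whence $D^{\st} = D^{\stf} = D^{\stt}$ by Lemma~\ref{l:nagata}(1), so condition (b) holds.

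The main obstacle is the content step in (1). Kang's $v$-operation argument uses a content formula of the shape $c(fg)^{v} = (c(f)c(g))^{v}$, which requires $D$ to be integrally closed (the flaw flagged in the remark preceding the statement); replacing it by the Dedekind--Mertens identity, which holds unconditionally over any commutative ring, bypasses the integrality hypothesis and makes the argument uniform for an arbitrary semistar operation.
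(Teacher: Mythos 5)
Your high-level architecture matches the paper's (everything reduces to a Dedekind--Mertens content computation in part (1), with (2)--(4) following formally), but the opening move of your proof of (1) is wrong as stated. An element $\psi\in(\na:E)$ need not lie in $\na$, so you cannot ``write $\psi=f/g$ with $f\in D[X]$ and $g\in N(\st)$''; the set of elements of $K(X)$ admitting such a representation (even with $f\in K[X]$) is a proper subring of $K(X)$ in general. What is true, and what must be argued first, is that $\psi=f/g$ with $f\in K[X]$ and $g\in N(\st)$: pick a nonzero $a\in E$; then $\psi a\in\na$, so $\psi ag\in D[X]$ for some $g\in N(\st)$, whence $\psi g\in a^{-1}D[X]\subseteq K[X]$. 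This repair costs you nothing downstream: your key claim is stated for $h\in\na\cap K[X]$ and is only applied to $h=fe$, which lies there even when $f\in K[X]\setminus D[X]$, and $f\in c(f)D[X]$ still yields $f\in(D^{\st}:E)\na$ once $c(f)\subseteq(D^{\st}:E)$ is known. So the gap is local and easily closed, but it does need closing.

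With that fixed, your proof of the content claim is correct and genuinely different from the paper's. The paper shows $c(f)b\subseteq D^{\st}$ for each $b\in E$ by applying Dedekind--Mertens to $f$ and an auxiliary $h\in N(\st)$ with $fbh\in D[X]$, then using $c(h)^{\st}=D^{\st}$ to conclude $c(f)^{\st}=c(fh)^{\st}$. You instead prove that $c(h)\subseteq D^{\stt}$ for \emph{every} $h\in\na\cap K[X]$, by localizing at each $M\in\QMax^{\stf}(D)$ and applying Dedekind--Mertens there. Your statement is stronger---it gives $c(f)\subseteq(D^{\stt}:E)$ rather than merely $(D^{\st}:E)$---which is why you can establish (3) independently and then deduce (b)$\Rightarrow$(a) of (2) from (3), whereas the paper proves (2) first and derives (3) from it. Both routes are sound and of comparable length; yours isolates a reusable fact about $\na\cap K[X]$, while the paper's stays closer to Kang's original computation. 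Your treatments of (2)(a)$\Rightarrow$(c)$\Rightarrow$(b) and of (4) coincide with the paper's.
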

\begin{proof} (1) Let $E \in \boldsymbol{\overline{F}}(D)$, and let $\psi \in ({\rm Na}(D,\st):E)$.  For $a \in E$, $a \ne 0$, we may find $g \in N(\st)$ such that $\psi ag \in D[X]$.  This yields $\psi g \in a^{-1}D[X] \subseteq K[X]$, and hence $\psi = f/g$ for some $f \in K[X]$.  We claim that $c(f) \subseteq (D^{\st}:E)$.  Granting this, we have $f \in (D^{\st}:E)D[X]$, from which it follows that $\psi = f/g \in (D^{\st}:E){\rm Na}(D,\st)$, as desired.  To prove the claim, take $b \in E$, and note that $fb\in {\rm Na}(D,\st)$.  Hence $fbh \in D[X]$ for some $h \in N(\st)$, and so $c(fh)b\subseteq D$.  By the content formula \cite[Theorem 28.1]{G}, there is an integer $m$ for which $c(f)c(h)^{m+1}=c(fh)c(h)^m$.  Using the fact that $c(h)^{\st}=D^{\st}$, we obtain $c(f)^{\st}=c(fh)^{\st}$ and hence that $c(f)b \subseteq c(fh)^{\st}b \subseteq D^{\st}$.  Therefore, $c(f) \subseteq (D^{\st}:E)$, as claimed.

(2) Under the assumption in (c), $D^{\st} \subseteq {\rm Na}(D,\st) \cap K=D^{\stt}$ (Lemma~\ref{l:nagata}(6)).  Hence (c) $\Rightarrow$ (b). Now assume that $D^{\st}=D^{\stt}$.  Then for $E \in \boldsymbol{\overline{F}}(D)$, we have $(D^{\st}:E)E \subseteq D^{\st}=D^{\stt} \subseteq {\rm Na}(D,\st)$; using (1), the implication (b) $\Rightarrow$ (a) follows.  That (a) $\Rightarrow$ (c) follows upon taking $E=D$ in (a).

(3) This follows easily from (2), because $\na={\rm Na}(D, \stt)$ by Lemma~\ref{l:nagata}(5).
%
%(3) This follows from (2) (applied to $I$, $I^{\st}$, and $ID^{\st}$).
%
%(4) The equivalence (a) $\Leftrightarrow$ (b) follows from the equality $(D^{\st}:I^{\st})=(D^{\st}:ID^{\st})$.  {\gc NO, IT DOES NOT!!!!  ALL OF THIS IS WRONG!!!}Assume that $ID^{\st}$ is a divisorial ideal of $D^{\st}$.  Applying 2(a) twice, and setting $\cal N=\na$, we have $$(\cal N:(\cal N:I\cal N))=(D^{\st}:(D^{\st}:ID^{\st}))\cal N=ID^{\st}\cal N=I\cal  N.$$ Hence (b) $\Rightarrow$ (c).  Now assume  that $I\cal N$ is divisorial. %Then, using (3), $$(\cal N:(\cal N:I^{\st}))=(\cal N:(\cal N:I))=I\cal N \subseteq I^{\st}\cal N.$$ Thus (c) $\Rightarrow$ (d).  Finally, suppose that $I^{\st}\cal N$ is divisorial.
%Then $$(D^{\st}:(D^{\st}:I^{\st}))\subseteq (D^{\st}:(D^{\st}:I))\cal N=(\cal N:(\cal N:I))=I\cal N,$$ which yields $(D^{\st}:(D^{\st}:I^{\st})) \subseteq I\cal N \cap K=I^{\stt}\subseteq I^{\st}$ {\gc ref?????}.  Therefore, (c) $\Rightarrow$ (a).
%
%(5) Assume that $D^{\st}=D^{\stt}$ and that $I^{\st}$ is divisorial.  Then the proof of (c) $\Rightarrow$ (a) in (4) shows that $I^{\st}=I^{\stt}$. We then have $I^{\st}\cal N=I^{\stt}\cal N=I\cal N$ {\gc ref???}.
%

(4) This follows from (2), since if $D$ is a P$\st$MD, then $D^{\st}=D^{\stt}$ by Lemma~\ref{l:p*md}(2a).
\end{proof}

%{\gc Maybe we should put all of the ``divisoriality'' stuff in another section, including part of Lemma~\ref{l:ext}.\bs}

The conditions in Lemma~\ref{l:ext}(2) need not hold: Let $F \subsetneq k$ be fields, $V=k[[x]]$ the power series ring over $V$ in one variable, and $D=F+M$, where $M=xk[[x]]$. Define a (finite-type)  semistar operation $\st$ on $D$ by $A^{\st}=AV$ for $A \in \FF(D)$. Then $D^{\st}=V \supsetneq D=D_M=D^{\stt}$.

We can now extend Theorem~\ref{t:sharp} to P$\star$MDs.

%%%%%%%% COROLLARY \label{c:sharp}
\begin{corollary} \label{c:sharp} %Let $D$ be a domain, $\star$ a semistar operation on $D$ such that $D$ is a P$\star$MD. Assume that $D$ is  $\stf$-sharp, $I$ a nonzero quasi-$\stf$-ideal of $D$ with $(D^{\star}:ID^{\star})=D^\star$, %$I^*=D^*$.
%then $I$ is $\stf$-idempotent.
Let $\st$ be a semistar operation on $D$ such that $D$ is a $\stf$-sharp P$\st$MD, and let $I$ be a nonzero ideal of $D$ with $(D^{\st}:I)=D^{\st}$. Then $I$ is $\stf$-idempotent.
\end{corollary}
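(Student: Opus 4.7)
The plan is to transfer the problem to the Prüfer world via the Nagata ring and apply Theorem~\ref{t:sharp} directly. Since $D$ is a P$\st$MD, Lemma~\ref{l:p*md}(1) tells us that $\na$ is a Prüfer domain, and since $D$ is $\stf$-sharp, Lemma~\ref{l:branched-idempotent}(5b) gives that $\na$ is sharp. So $\na$ satisfies the hypotheses of Theorem~\ref{t:sharp}, and it suffices to show that the nonzero ideal $I\na$ of $\na$ satisfies $(I\na)^{-1} = \na$; then $I\na$ will be idempotent and Lemma~\ref{l:branched-idempotent}(1) (the P$\st$MD form) will transfer this back to $I$ being $\stf$-idempotent.

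So the heart of the argument is the computation of $(\na:I)$. Because $D$ is a P$\st$MD, Lemma~\ref{l:ext}(4) ensures that the equivalent conditions of Lemma~\ref{l:ext}(2) hold; in particular,
\[
(D^{\st}:I)\na \;=\; (\na:I).
\]
The hypothesis $(D^{\st}:I) = D^{\st}$ then reduces this to $(\na:I) = D^{\st}\na$. Now using $D^{\st}=D^{\stt}$ (Lemma~\ref{l:p*md}(2a)) together with $D^{\stt}\na = D\na = \na$ (Lemma~\ref{l:nagata}(6)), we conclude $D^{\st}\na = \na$, so $(\na:I) = \na$, which is the same as $(I\na)^{-1}=\na$.

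At this point Theorem~\ref{t:sharp} applied in the sharp Prüfer domain $\na$ yields that $I\na$ is idempotent, i.e., $(I\na)^2 = I\na$. Finally, invoking Lemma~\ref{l:branched-idempotent}(1) in the P$\st$MD case, $E\na$ idempotent is equivalent to $E$ being $\stf$-idempotent; specializing to $E = I$, we conclude $I^{\stf} = (I^2)^{\stf}$, as required.

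I do not expect a genuine obstacle here: every ingredient is already packaged in the preceding lemmas. The only point that requires a moment of care is the identification $(\na:I) = D^{\st}\na$ via Lemma~\ref{l:ext}(4), because one must remember that the equality $(D^{\st}:E)\na = (\na:E)$ really does require the P$\st$MD hypothesis (the counterexample just before Corollary~\ref{c:sharp} shows it can fail otherwise). Once this is in hand, the proof is a straightforward assembly.
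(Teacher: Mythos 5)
Your proof is correct and follows essentially the same route as the paper: both reduce to the Nagata ring via the extension formula $(D^{\st}:I)\na=(\na:I)$ (the paper cites Lemma~\ref{l:ext}(3), which amounts to the same thing since $D^{\st}=D^{\stt}$ in a P$\st$MD), apply Theorem~\ref{t:sharp} in the sharp Pr\"ufer domain $\na$, and transfer idempotence back through Lemma~\ref{l:branched-idempotent}(1). No issues.
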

%%%%%%%%%%%%%%%%%%%%%%%
\begin{proof} By Lemma~\ref{l:ext}(3), we have $$({\rm Na}(D,\st):I{\rm Na}(D,\st))=(D^{\st}:I){\rm Na}(D,\st)=D^{\st}{\rm Na}(D,\st)={\rm Na}(D,\st).$$ Hence $I{\rm Na}(D,\st)$ is idempotent in the Pr\"ufer domain ${\rm Na}(D,\st)$ by Theorem~\ref{t:sharp}.  Lemma~\ref{l:branched-idempotent}(1) then yields that $I$ is $\stf$-idempotent.
\end{proof}
%%%%%%%

Many semistar counterparts of ideal-theoretic properties in domains result in equations that are  ``external'' to $D$, since for a semistar operation $\st$ on $D$ and a nonzero ideal $I$ of $D$, it is possible that $I^{\st} \nsubseteq D$. Of course, $\st$-idempotence is one such property.  Often, one can obtain a ``cleaner'' counterpart by specializing from P$\st$MDs to ``ordinary'' P$v$MDs.  We recall some terminology.  Semistar operations are generalizations of \emph{star} operations, first considered by Krull and repopularized by Gilmer \cite[Sections 32, 34]{G}. Roughly, a star operation is a semistar operation restricted to the set $\F(D)$ of nonzero fractional ideals of $D$ with the added requirement that one has $D^{\st}=D$. The most important star operation (aside from the $d$-, or trivial, star operation) is the \emph{$v$-operation}: For $E \in \F(D)$, put $E^{-1}=\{x \in K \mid xE \subseteq D\}$ and $E^v=(E^{-1})^{-1}$.  Then $v_{\! f}$ (restricted to $\F(D)$) is the \emph{$t$-operation} and $\widetilde v$ is the $w$-operation. Thus a \emph{P$v$MD} is a domain in which each nonzero finitely generated ideal is $t$-invertible.  Corollary~\ref{c:sharp} then has the following restricted interpretation (which has the advantage of being \emph{internal} to $D$).

\begin{corollary} \label{c:converse-pvmd} If $D$ is a $t$-sharp P$v$MD and $I$ is a nonzero ideal of $D$ for which $I^{-1}=D$, then $I$ is $t$-idempotent. %\qed
\end{corollary}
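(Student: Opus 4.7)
The plan is to obtain this corollary as a direct specialization of Corollary~\ref{c:sharp}, taking the semistar operation $\st$ to be the classical $v$-operation on $D$. Since the $v$-operation is in fact a star operation on $D$, one has $D^v=D$, and consequently $\stf=v_{\!f}=t$ and $\stt=\widetilde v=w$.

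Under this choice, each hypothesis of Corollary~\ref{c:sharp} matches a hypothesis of the present corollary verbatim: a P$v$MD is by definition a P$\st$MD for $\st=v$, and ``$t$-sharp'' is exactly ``$\stf$-sharp'' in this setting. The condition $(D^{\st}:I)=D^{\st}$ becomes $(D:I)=D$, i.e.\ $I^{-1}=D$. So invoking Corollary~\ref{c:sharp} immediately yields that $I$ is $\stf$-idempotent, which in this case reads $I^t=(I^2)^t$, that is, $I$ is $t$-idempotent. There is no real obstacle here; the only thing to be a little careful about is the translation of notation between the semistar and star settings, and in particular the fact that with $\st=v$ the ``external'' identity $(D^{\st}:I)=D^{\st}$ collapses to the familiar internal identity $I^{-1}=D$, which is precisely what makes the statement cleaner than Corollary~\ref{c:sharp}.
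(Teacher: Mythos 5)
Your proposal is correct and is exactly the paper's intended argument: the corollary is stated as the ``restricted interpretation'' of Corollary~\ref{c:sharp} obtained by specializing $\st$ to (an extension of) the $v$-operation, so that $\stf=t$, $D^{\st}=D$, and $(D^{\st}:I)=D^{\st}$ collapses to $I^{-1}=D$. The only point worth making explicit (as the paper does in the proof of Corollary~\ref{c:pvmd}) is that $v$ is a star operation defined only on $\F(D)$, so one formally takes $\st$ to be any semistar extension of $v$ to $\Fb(D)$; this does not affect the argument.
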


Our next result is a partial converse to Theorem~\ref{t:sharp}.

%%%% PROPOSITION\label{t:converse}
\begin{proposition} \label{p:converse}
 Let $D$ be a Pr\"ufer domain such that $I$ is idempotent whenever $I$ is a nonzero ideal of $D$ with $I^{-1} =D$.  Then, every branched maximal ideal of $D$ is sharp.
\end{proposition}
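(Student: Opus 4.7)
The plan is to prove the contrapositive: assume $M$ is a branched maximal ideal of $D$ that is not sharp; I will construct a nonzero non-idempotent ideal $Q$ of $D$ with $Q^{-1}=D$.

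First, I use the valuation-domain structure of $D_M$ to locate an element $a\in M$ for which $aD_M$ is $MD_M$-primary. Since $M$ is branched, there exists an $MD_M$-primary ideal $Q'\subsetneq MD_M$. Choose $a\in MD_M\setminus Q'$ (which may be taken in $M$ after clearing denominators). The prime $P:=\sqrt{aD_M}$ lies inside $MD_M$ and is comparable to $Q'$ in the valuation domain $D_M$: if $P\subseteq Q'$, then $a\in Q'$, a contradiction; if $Q'\subseteq P$, then $MD_M=\sqrt{Q'}\subseteq P$, forcing $P=MD_M$. Either way, $\sqrt{aD_M}=MD_M$. Now set $Q:=aD_M\cap D$. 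Then $Q$ is an $M$-primary ideal of $D$ with $QD_M=aD_M$, and $Q$ is not idempotent: we have $a\in Q$ while $Q^2\subseteq a^2D_M$, and $a\notin a^2D_M$ because $a$ is a non-unit in $D_M$.

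Next, I use non-sharpness of $M$ to force $Q^{-1}=D$. The hypothesis $\Theta(M)\subseteq D_M$, combined with the Pr\"ufer representation $D=\bigcap_{N\in\Max(D)}D_N=D_M\cap\Theta(M)$, gives $\Theta(M)=D$. For $y\in(D:Q)$ I localize at each maximal ideal $N$. When $N\ne M$, $Q\not\subseteq N$ (since $Q$ is $M$-primary and $M$ is maximal), so $QD_N=D_N$ and $y\in D_N$. When $N=M$, $yQD_M=yaD_M\subseteq D_M$ gives $y\in a^{-1}D_M$. Hence $y\in a^{-1}D_M\cap\Theta(M)=a^{-1}D_M\cap D=D$ (the last equality is immediate since $aD\subseteq D_M$). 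Combined with the obvious $D\subseteq Q^{-1}$, this yields $Q^{-1}=D$, contradicting the hypothesis of the proposition applied to the non-idempotent ideal $Q$.

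The main obstacle is the first step, i.e., verifying that a branched maximal ideal of a valuation domain is the radical of a principal ideal; this is a standard fact whose short proof uses only that the ideal lattice of a valuation domain is totally ordered by inclusion. Once $Q=aD_M\cap D$ is in hand, the remainder of the argument is a routine localization computation, which becomes clean precisely because non-sharpness collapses $\Theta(M)$ down to $D$.
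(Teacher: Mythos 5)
Your proof is correct and follows essentially the same route as the paper's: both hinge on the ideal $Q = aD_M\cap D$ where $MD_M=\mathrm{rad}(aD_M)$, which is $M$-primary and non-idempotent, and both then compare $Q^{-1}$ with $\Theta(M)$ by localizing at each maximal ideal. The only differences are cosmetic: you argue contrapositively (non-sharp $\Rightarrow$ $Q^{-1}=D$ with $Q$ non-idempotent) where the paper argues directly (extracting $u\in Q^{-1}\setminus D$ and showing $u\in\Theta(M)\setminus D_M$), and you prove inline the standard fact that a branched maximal ideal of a valuation domain is the radical of a principal ideal, which the paper cites from Gilmer.
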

%%%%%%%
\begin{proof} Let $M$ be a branched maximal ideal of $D$.
Then $MD_M =\mbox{rad}({aD_M})$ for some nonzero element $a \in M$   \cite[Theorem 17.3]{G}.
Let $I:=aD_M \cap D$.  Then $I$ is $M$-primary, and since $ID_M=aD_M$, ($ID_M$ and hence) $I$ is not idempotent.
By hypothesis, we may choose $u \in I^{-1} \setminus D$.
Since $Iu \subseteq D$ and $ID_N =D_N$ for $N \in \Max(D) \setminus \{M\}$,  then $u \in \bigcap \{D_N \mid N \in \textrm{Max}(D), N \ne M\}$.  On the other hand, since $u \notin D$, $u \not\in D_M$.  It follows that $M$ is sharp.
\end{proof}
%%%%%%%%%%

Now we extend Proposition~\ref{p:converse} to P$\st$MDs.

%%%%%%%%%%%%%%%%%
%%%   COROLLARY \label{c:converse}
\begin{corollary} \label{c:converse}
Let $\st$ be a semistar operation on $D$, and assume that $D$ is a P$\star$MD such that $I$ is $\stf$-idempotent whenever $I$ is a nonzero %quasi-$\stf$-
ideal of $D$ with $(D^{\star}:I)=D^{\star}$. %$I^*=D^*$.
Then, each branched quasi-$\stf$-maximal ideal of $D$ is $\stf$-sharp. $($In particular if $D$ is a P$v$MD in which $I$ is $t$-idempotent whenever $I$ is a nonzero ideal of $D$ with $I^{-1}=D$, then each branched maximal $t$-ideal of $D$ is $t$-sharp.$)$
\end{corollary}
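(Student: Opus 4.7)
The plan is to run the transfer machine in the reverse direction compared to the proof of Corollary~\ref{c:sharp}: use Lemma~\ref{l:p*md} to move to the Pr\"ufer domain $\na$, apply Proposition~\ref{p:converse} there, and pull the conclusion back to $D$ using Lemmas~\ref{l:branched-idempotent} and~\ref{l:nagata}.

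First, since $D$ is a P$\st$MD, $\na$ is a Pr\"ufer domain by Lemma~\ref{l:p*md}(1). To apply Proposition~\ref{p:converse} to $\na$, I have to verify its hypothesis: every nonzero ideal $J$ of $\na$ with $J^{-1}=\na$ is idempotent. Given such a $J$, Lemma~\ref{l:p*md}(1c) lets me write $J=I\na$ for some nonzero ideal $I$ of $D$. By Lemma~\ref{l:ext}(3) together with Lemma~\ref{l:p*md}(2a) (which gives $D^{\st}=D^{\stt}$), I have
$$(D^{\st}:I)\na=(\na:I\na)=\na.$$
Because $D$ is a P$\st$MD, $D^{\st}=D^{\stt}=\bigcap\{D_P\mid P\in\QMax^{\stf}(D)\}$ is an overring of $D$, and by Lemma~\ref{l:nagata}(6) it equals $\na\cap K$; in particular $D^{\st}\subseteq\na$. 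Combining $D^{\st}\cdot I\subseteq D^{\st}$ with the display above yields $D^{\st}\subseteq(D^{\st}:I)\subseteq(D^{\st}:I)\na\cap K=\na\cap K=D^{\st}$, so $(D^{\st}:I)=D^{\st}$.

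By the standing hypothesis, $I$ is then $\stf$-idempotent, and Lemma~\ref{l:branched-idempotent}(1) (using that $\stf=\stt$ in a P$\st$MD) gives that $J=I\na$ is idempotent in $\na$. Thus the hypothesis of Proposition~\ref{p:converse} is verified, and we conclude that every branched maximal ideal of $\na$ is sharp.

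Now let $M$ be a branched quasi-$\stf$-maximal ideal of $D$. By Lemma~\ref{l:nagata}(3), $M\na$ is a maximal ideal of $\na$, and by Lemma~\ref{l:branched-idempotent}(2b) it is branched. The previous paragraph then tells us $M\na$ is sharp in $\na$, and Lemma~\ref{l:branched-idempotent}(5a) transfers this back: $M$ is $\stf$-sharp in $D$. The parenthetical statement for a P$v$MD is the specialization $\st=v$ (so $\stf=t$), after noting that $(D:I)=D$ is exactly the condition $I^{-1}=D$, and that in a P$v$MD the quasi-$t$-maximal ideals are precisely the maximal $t$-ideals.

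The only real subtlety — and the one place I expect the argument could have trailed off — is the passage from $(\na:I\na)=\na$ back to the \emph{internal} condition $(D^{\st}:I)=D^{\st}$. That step is where Lemma~\ref{l:ext} is essential, together with the fact that in a P$\st$MD one has $D^{\st}=D^{\stt}=\na\cap K$, which is what lets us intersect with $K$ and recover equality rather than merely extension-equality.
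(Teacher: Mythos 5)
Your proof is correct and follows essentially the same route as the paper's: reduce to the Pr\"ufer domain $\na$ via Lemma~\ref{l:p*md}(1c), use Lemma~\ref{l:ext}(3) with Lemma~\ref{l:nagata}(6) to convert $(\na:J)=\na$ into the internal condition $(D^{\st}:I)=D^{\st}$, apply Proposition~\ref{p:converse}, and transfer back with Lemma~\ref{l:branched-idempotent}. The only difference is that you spell out the intersect-with-$K$ step explicitly, which the paper leaves implicit.
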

%%%%%%%%%%%%%
\begin{proof}
Let $J$ be a a nonzero ideal  of the Pr\"ufer domain $\Na(D,\star)$ with $(\Na(D,\star):J)=\Na(D,\star)$.
By Lemma~\ref{l:p*md}(1c), $J = I\Na(D,\star)$ for some ideal $I$ of $D$.
 %Let $I:= J\cap D$.
 %It is easy to see that $I$ is a quasi-$\stf$-ideal of $D$ with $I\Na(D,\star) =J$ {\rc ???}.
%Let $J$ be an ideal of $\Na(D,\star)$ with $(\Na(D,\star):J)=\Na(D,\star)$, {\bc and write $J=I\Na(D,\star)$ for some ideal $I$ of $D$.}
Applying Lemma~\ref{l:ext}(3) and Lemma~\ref{l:nagata}(6), we obtain $(D^{\st}:I)=D^{\st}$. Hence, by hypothesis, $I$ is $\stf$-idempotent, and this yields that
%Using the argument in the proof of Corollary~\ref{c:sharp}, we obtain
%$(D^{\star}:ID^{\star})\Na(D,\star) = ( \Na(D,\star)  :I \Na(D,\star))= \Na(D, \star)$,  thus $(D^{\star}:ID^{\star})=  D^{\star}$. Thus, by assumption, $I$ is $\stf$-idempotent
%$I\Na(D^{\star},v_{D^{\star}}) )=(\Na(D,\star) : I\Na(D,\star))$. Since, by assumption $I$ is $\star_f$-idempotent,
$J=I\Na(D, \star)$ is idempotent in the Pr\"ufer domain $\Na(D,\star)$ (Lemma~\ref{l:branched-idempotent}(1)).
%By hypothesis, $I$ is $\stf$-idempotent, and hence $J$ is idempotent  (Lemma \ref{branched-idempotent}(1)).
Now, let $M$ be a branched quasi-$\stf$-maximal ideal of $D$. Then, by Lemma~\ref{l:branched-idempotent}(2), $M\Na(D,\star)$ is a branched maximal ideal of $\Na(D,\star)$.  We may now apply Proposition~\ref{p:converse} to conclude that $M\Na(D,\star)$ is sharp.  Therefore, $M$ is $\stf$-sharp in $D$ by %Corollary \ref{sharp}(1).
Lemma~\ref{l:branched-idempotent}(5).
\end{proof}
%%%%%%%%%%%%

If $P$ is a prime ideal of a Pr\"ufer domain $D$, then powers of $P$ are $P$-primary by \cite[Theorem 23.3(b)]{G}; it follows that $P$ is idempotent if and only if $PD_P$ is idempotent. We use this fact in the next result.

 It is well known that a proper idempotent ideal of a valuation domain must be prime \cite[Theorem 17.1(3)]{G}.  In fact, according to \cite[Exercise 3, p. 284]{G}, a proper idempotent ideal in a Pr\"ufer domain must be a radical ideal.  We (re-)prove  and extend this fact and add a converse:

%%%%%%%  THEOREM \label{t:idemp}
\begin{theorem} \label{t:idemp} Let $D$ be a Pr\"ufer domain, and let $I$ be an ideal of $D$.  Then $I$ is idempotent if and only if $I$ is a radical ideal each of whose minimal primes is idempotent.
\end{theorem}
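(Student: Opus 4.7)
The plan is to localize at primes of $D$ (so each $D_Q$ becomes a valuation domain) and exploit two facts: a proper idempotent ideal of a valuation domain is prime \cite[Theorem 17.1(3)]{G}, and, as noted just before the statement, a prime $P$ of a Pr\"ufer domain is idempotent in $D$ if and only if $PD_P$ is idempotent in $D_P$. Both idempotence and radicality of an ideal are local conditions on $D$ (checkable after localizing at every maximal ideal), and this is what will let me glue the local pictures back together.

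For the forward direction, assume $I^2 = I$. For any prime $Q$ with $I \subseteq Q$, the ideal $ID_Q$ is a proper idempotent ideal of the valuation domain $D_Q$, so it is prime, and in particular radical. By the local nature of radicality, $I$ itself is radical. Now let $P$ be a minimal prime of $I$. The prime ideal $ID_P \subseteq PD_P$ of $D_P$ corresponds, via the bijection between primes of $D_P$ and primes of $D$ contained in $P$, to a prime of $D$ lying between $I$ and $P$; minimality of $P$ forces equality, so $ID_P = PD_P$. Idempotence of $ID_P$ then gives idempotence of $PD_P$, and hence of $P$ itself by the prefatory fact.

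For the converse, suppose $I$ is radical with each minimal prime idempotent. Fix a maximal ideal $M$ of $D$. Primes of $D_M$ form a chain, so if two distinct minimal primes of $I$ both sat inside $M$, their extensions would be comparable, forcing comparability in $D$ and contradicting their minimality over $I$. Hence at most one minimal prime of $I$ lies in $M$. If none does, then $I \not\subseteq M$ and $ID_M = D_M = I^2 D_M$ trivially. Otherwise let $P$ be the unique minimal prime of $I$ with $P \subseteq M$. Because $I$ is radical, so is $ID_M$; and since the primes of $D_M$ form a chain, the radical of any ideal of $D_M$ equals the smallest prime containing it, so radical ideals of $D_M$ are prime. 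The primes of $D_M$ containing $ID_M$ correspond to primes of $D$ between $I$ and $M$, and any such prime contains a minimal prime of $I$ contained in $M$, i.e., $P$. Therefore $PD_M$ is the smallest prime of $D_M$ containing the prime $ID_M$, and the two coincide. Idempotence of $P$ now yields $ID_M = PD_M = (PD_M)^2 = I^2 D_M$, and hence $I = I^2$.

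The main subtlety is in the converse: one must localize radicality, observe that at most one minimal prime of $I$ lives in any maximal ideal of $D$, and identify $ID_M$ with $PD_M$ using the chain structure of primes in the valuation $D_M$. Once these three ingredients are in place the local-to-global step is immediate, and the forward direction is essentially mechanical once the valuation-domain structure is invoked.
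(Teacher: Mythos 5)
Your proof is correct and follows essentially the same route as the paper's: localize at primes to reduce to valuation domains, invoke the fact that a proper idempotent ideal of a valuation domain is prime, and use the prefatory remark that a prime $P$ of a Pr\"ufer domain is idempotent exactly when $PD_P$ is. The only difference is that you spell out in more detail the identification $ID_M=PD_M$ in the converse (via radicality and the chain structure of primes in $D_M$), which the paper leaves implicit.
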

%%%%%%
\begin{proof} The result is trivial for $I=(0)$ and vacuously true for $I=D$. Suppose that $I$ is a proper nonzero idempotent ideal of $D$, and let $P$ be a prime minimal over $I$.  Then $ID_P$ is idempotent, and we must have $ID_P=PD_P$ \cite[Theorem 17.1(3)]{G}. Hence $PD_P$ is idempotent, and therefore, by the comment above, so is $P$.   Now let $M$ be a maximal ideal containing $I$.  Then $ID_M$ is idempotent, hence prime (hence radical).  It follows (checking locally) that $I$ is a radical ideal.
%, and let $M$ be a maximal ideal containing $P$.
%Then $ID_M$ is idempotent in the valuation domain $D_M$ and must therefore be prime.
%It follows that $ID_M=PD_M$, and hence $PD_M$ is idempotent.  Thus $PD_N=P^2D_N$ for every maximal ideal $N$ containing $P$, and a local check yields $P=P^2$. 
% Now let $x \in \text{rad}(I)$, let $M$ be a maximal ideal containing $I$, and shrink $M$ to a prime $Q$ minimal over $I$.  Then, again by \cite[Theorem 17.1(3)]{G}, it must be the case that $ID_M=QD_M$, whence $x \in ID_M$.  A local check yields $x \in I$.  Therefore, $I$ is a radical ideal.

Conversely, let $I$ be a radical ideal each of whose minimal primes is idempotent.  If $M$ is a maximal ideal containing $I$ and $P$ is a minimal prime of $I$ contained in $M$, then $ID_M=PD_M$.  Since $P$ is idempotent, this yields $ID_M=I^2D_M$.  It follows that $I$ is idempotent.
\end{proof}
%%%%%

We next extend Theorem~\ref{t:idemp} to P$\star$MDs.

%%%%%%%% COROLLARY \label{c:idemp}
\begin{corollary} \label{c:idemp} Let $D$ be a P$\star$MD, where $\star$ is a semistar operation on $D$, and let $I$ be a quasi-$\stf$-ideal of $D$.  Then $I$ is $\stf$-idempotent if and only if $I$ is a radical ideal each of whose minimal primes is $\stf$-idempotent. $($In  particular,  if $D$ is a P$v$MD and $I$ is a $t$-ideal of $D$, then $I$ is $t$-idempotent if and only if $I$ is a radical ideal each of whose minimal primes is $t$-idempotent.$)$
\end{corollary}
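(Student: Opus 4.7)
The plan is to reduce the corollary to Theorem~\ref{t:idemp}, applied to $I\na$ in the Pr\"ufer domain $\na$ (Lemma~\ref{l:p*md}(1)), using the transfer statements collected in Lemma~\ref{l:branched-idempotent}. Since $D$ is a P$\st$MD we have $\stf = \stt$ (Lemma~\ref{l:p*md}(2a)), so Lemma~\ref{l:branched-idempotent}(1) gives that an element of $\Fb(D)$ is $\stf$-idempotent if and only if its extension to $\na$ is idempotent, and Lemma~\ref{l:branched-idempotent}(4) gives that the quasi-$\stf$-ideal $I$ is radical if and only if $I\na$ is radical. The bookkeeping of minimal primes will be handled through the bijection $\QSpec^{\stf}(D) \leftrightarrow \Spec(\na)$ of Lemma~\ref{l:p*md}(2b), after first recording the (standard) fact that in a P$\st$MD a prime of $D$ lies in $\QSpec^{\stf}(D)$ exactly when it is contained in some $M \in \QMax^{\stf}(D)$ (since $D_M$ is then a valuation domain and primes of $D_M$ contract to quasi-$\stf$-primes of $D$); from this, the minimal primes of $I\na$ in $\na$ correspond bijectively under contraction to those minimal primes of $I$ in $D$ that are quasi-$\stf$-primes.

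For the $(\Leftarrow)$ direction, assume $I$ is radical with each minimal prime $\stf$-idempotent. Then $I\na$ is radical, and each of its minimal primes is of the form $P\na$ with $P$ a quasi-$\stf$-prime minimal over $I$; such $P$ is $\stf$-idempotent by hypothesis, so $P\na$ is idempotent by Lemma~\ref{l:branched-idempotent}(1). Theorem~\ref{t:idemp} then yields that $I\na$ is idempotent in the Pr\"ufer domain $\na$, whence $I$ is $\stf$-idempotent.

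For the $(\Rightarrow)$ direction, assume $I$ is $\stf$-idempotent. Theorem~\ref{t:idemp} applied to the idempotent $I\na$ gives that $I\na$ is radical with each minimal prime idempotent, so $I$ is radical in $D$ and each minimal prime of $I$ that is a quasi-$\stf$-prime is $\stf$-idempotent. The potential obstacle is that $I$ might have minimal primes in $D$ that are not quasi-$\stf$-primes, and so are not captured by the Nagata transfer. However, such a prime $P$ is contained in no $M \in \QMax^{\stf}(D)$, so $PD_M = D_M$ for every such $M$, yielding
\[
P^{\stf} = P^{\stt} = \bigcap_{M \in \QMax^{\stf}(D)} PD_M = \bigcap_{M \in \QMax^{\stf}(D)} D_M = D^{\stt} = D^{\stf},
\]
and similarly $(P^2)^{\stf} = D^{\stf}$, so $P$ is $\stf$-idempotent trivially. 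The heart of the argument is thus the combination of the Nagata-ring transfer for quasi-$\stf$-prime minimal primes with the observation that non-$\stf$-prime minimal primes are automatically $\stf$-idempotent in a P$\st$MD.
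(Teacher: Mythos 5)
Your proof is correct and follows essentially the same route as the paper's: transfer to the Pr\"ufer domain $\na$ via Lemma~\ref{l:branched-idempotent}(1),(4) and Lemma~\ref{l:p*md}(2b), then invoke Theorem~\ref{t:idemp}. The only (harmless) difference is that the paper simply uses the standard fact that every minimal prime of the quasi-$\stf$-ideal $I$ is itself a quasi-$\stf$-prime, whereas you sidestep this by observing that any hypothetical minimal prime lying in no quasi-$\stf$-maximal ideal would be trivially $\stf$-idempotent.
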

%%%%%%%%
\begin{proof}  Suppose that $I$ is $\stf$-idempotent. Then $I\Na(D,\star)$ is an idempotent ideal in $\Na(D,\star)$  by Lemma~\ref{l:branched-idempotent}(1).
By Theorem~\ref{t:idemp}, $I\Na(D,\star)$ is a radical ideal of $\Na(D,\star)$, and hence, by Lemma~\ref{l:branched-idempotent}(4), $I$ is a radical ideal of $D$. Now let $P$ be a minimal prime of $I$ in $D$. Then $P$ is a quasi-$\stf$-prime of $D$.  By Lemma~\ref{l:p*md}(2b) $P\Na(D,\star)$ is minimal over $I\Na(D,\star)$, whence $P\Na(D,\star)$ is idempotent, again by Theorem~\ref{t:idemp}. The $\stf$-idempotence of $P$ now follows from Lemma~\ref{l:branched-idempotent}(1).

  %{\rc since $P = P^{\stf} =  P\Na(D,\star) \cap D$.}
%  {\mc since $P$ is a quasi-$\stf$-prime of $D$. }
The converse follows by similar applications of Theorem~\ref{t:idemp} and Lemma~\ref{l:branched-idempotent}.
\end{proof}
%%%%%%%%

Recall that a Pr\"ufer domain is said to be \emph{strongly discrete} (\emph{discrete}) if it has no nonzero  (branched) idempotent prime ideals. Since unbranched primes in a Pr\"ufer domain must be idempotent \cite[Theorem 23.3(b)]{G}, a  Pr\"ufer domain is strongly discrete if and only if it is discrete and has no unbranched prime ideals.  We have the following straightforward application of Theorem~\ref{t:idemp}.

%Recall that a Pr\"ufer domain (P$\st$MD) is said to be ($\stf$-)\emph{discrete} if it has no nonzero  branched ($\stf$-)idempotent (quasi-$\stf$-)prime ideals and
%\emph{$(\stf)$-strongly discrete} if it has no idempotent (quasi-$\stf$-) prime ideals.  The next result follows immediately from  Theorem \ref{t:idemp}.
%Corollary~\ref{c:idemp}, by taking $\star=d$.

%%%%%% COROLLARY \label{c:strdisc}
\begin{corollary} \label{c:strdisc} Let $D$ be a Pr\"ufer domain. \begin{enumerate}
\item If $D$ is discrete, then an ideal $I$ of $D$ is idempotent if and only if $I$ is a radical ideal each of whose minimal primes is unbranched.
\item If $D$ is strongly discrete, then $D$ has no proper nonzero idempotent ideals.
\end{enumerate}
\end{corollary}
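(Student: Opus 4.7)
The plan is to derive both statements directly from Theorem~\ref{t:idemp}, which already characterizes idempotent ideals as exactly the radical ideals whose minimal primes are idempotent. What I need is to translate the hypothesis on $D$ (discrete or strongly discrete) into information about which prime ideals of $D$ can be idempotent, and then substitute.

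For part (1), I would first record the following dichotomy for a nonzero prime $P$ of a Pr\"ufer domain $D$: by the already-cited \cite[Theorem 23.3(b)]{G}, every unbranched prime is idempotent, while by definition of ``discrete'' no branched prime is idempotent. Hence, under the hypothesis that $D$ is discrete, a nonzero prime of $D$ is idempotent if and only if it is unbranched. Substituting this equivalence into the statement of Theorem~\ref{t:idemp}, ``each minimal prime of $I$ is idempotent'' becomes ``each minimal prime of $I$ is unbranched'' (the zero ideal being vacuously fine in either formulation), which gives exactly the desired characterization.

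For part (2), I would argue by contradiction. Suppose $D$ is strongly discrete, so that $D$ has no nonzero idempotent prime ideals whatsoever, and suppose toward a contradiction that $I$ is a proper nonzero idempotent ideal of $D$. Since $I$ is nonzero and proper, it has at least one minimal prime $P$, which is necessarily nonzero. By Theorem~\ref{t:idemp}, $P$ must be idempotent, contradicting the strong discreteness of $D$. Hence no such $I$ exists.

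I do not anticipate any real obstacle here: both parts are essentially bookkeeping applications of Theorem~\ref{t:idemp} combined with the definitions of discrete and strongly discrete Pr\"ufer domains, and the only nontrivial input beyond Theorem~\ref{t:idemp} is the classical fact that unbranched primes of a Pr\"ufer domain are automatically idempotent, which is quoted from Gilmer.
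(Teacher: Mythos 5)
Your argument is correct and is precisely the route the paper intends: the paper gives no written proof, calling the corollary a ``straightforward application of Theorem~\ref{t:idemp}'', and the dichotomy you use (unbranched primes are idempotent by Gilmer, while discreteness forbids branched idempotent primes) is exactly the observation the paper records in the paragraph immediately preceding the corollary. Nothing further is needed.
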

%%%%%%%%

 Let us call a P$\st$MD \emph{$\stf$-strongly discrete} (\emph{$\stf$-discrete}) if it has no (branched) $\stf$-idempotent quasi-$\stf$-prime ideals.  From Lemma~\ref{l:branched-idempotent}(1,2), we have the usual connection between a property of a P$\st$MD and the corresponding property of its $\st$-Nagata ring:

\begin{proposition} \label{p:discrete} Let $\st$ be a semistar operation on $D$.  Then $D$ is $\stf$-$($strongly$)$ discrete P$\st$MD if and only if ${\rm Na}(D,\st)$ is a $($strongly$)$ discrete Pr\"ufer domain. %\qed
\end{proposition}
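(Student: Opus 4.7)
The plan is to factor the equivalence into its two components: the P$\st$MD/Pr\"ufer equivalence, which is already recorded, and the absence of (branched) idempotent primes, which I will transfer through the natural bijection between quasi-$\stf$-primes of $D$ and primes of $\na$.

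First I would invoke Lemma~\ref{l:p*md}(1) to reduce to the case in which both $D$ is a P$\st$MD and $\na$ is a Pr\"ufer domain, since without these assumptions neither side of the biconditional can hold. Once this reduction is made, Lemma~\ref{l:p*md}(2b) supplies a bijection $\QSpec^{\stf}(D) \longleftrightarrow \Spec(\na)$ given by $P \mapsto P\na$ with inverse $Q \mapsto Q \cap D$, and this bijection visibly sends the zero ideal to the zero ideal (so nonzero primes correspond to nonzero primes).

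Next I would observe that this bijection carries the two relevant properties across faithfully. Indeed, by Lemma~\ref{l:branched-idempotent}(1) applied in the P$\st$MD $D$, a quasi-$\stf$-prime $P$ of $D$ is $\stf$-idempotent if and only if $P\na$ is idempotent in $\na$; and by Lemma~\ref{l:branched-idempotent}(2b) (valid since $\stt=\stf$ in a P$\st$MD, by Lemma~\ref{l:p*md}(2a)), $P$ is branched in $D$ if and only if $P\na$ is branched in $\na$. Combining these, $D$ admits a nonzero $\stf$-idempotent quasi-$\stf$-prime (respectively, a branched $\stf$-idempotent quasi-$\stf$-prime) if and only if $\na$ admits a nonzero (respectively, branched) idempotent prime. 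Taking contrapositives yields the strongly discrete and discrete equivalences, respectively.

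I do not expect a real obstacle here: the proposition is essentially an assembly of Lemma~\ref{l:p*md}(1),(2b) and Lemma~\ref{l:branched-idempotent}(1),(2b). The only mild care is to make sure one restricts to the P$\st$MD setting before quoting the latter transfer results, since the branchedness/idempotence equivalences are stated for quasi-$\stt$-primes and one uses $\stt=\stf$ to identify the relevant class of primes.
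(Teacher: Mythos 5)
Your proof is correct and matches the paper's intended argument: the paper gives no separate proof, merely noting that the proposition follows "from Lemma~\ref{l:branched-idempotent}(1,2)," and your assembly via Lemma~\ref{l:p*md}(1),(2b) together with the idempotence and branchedness transfers (using $\stt=\stf$ in a P$\st$MD) is exactly that argument spelled out.
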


%Let $\st$ be a seminstar operation on $D$, and let $I$ be a nonzero ideal of $D$.  We say that $I$ is \emph{$\st$-proper} if $I^{\st} \ne D^{\st}$. It is easy to see that $I$ is $\st$-proper if and only if $I^{\st} \cap D$ is a proper quasi-$\st$-ideal.  We claim that $I$ is a ($\st$-proper) $\st$-idempotent ideal if and only if $I^{\st}\cap D$ is a (proper) $\st$-idempotent quasi-$\st$-ideal.  To see this, first note that $$(I^2)^{\st}=((I^{\st})^2)^{\st} \supseteq ((I^{\st}\cap D)^2)^{\st}\supseteq (I^2)^{\st},$$ i.e., $(I^2)^{\st}=((I^{\st}\cap D)^2)^{\st}$.  If $I$ is $\st$-idempotent, then $$(I^{\st}\cap D)^{\st}=I^{\st}=(I^2)^{\st}=((I^{\st}\cap D)^2)^{\st},$$ so that $I^{\st}\cap D$ is a $\st$-idempotent quasi-$\st$-ideal. A similar argument yields the converse.
%, from which it follows that $I$ is a ($\st$-proper) $\st$-idempotent ideal of $D$ if and only if $I^{\st} \cap D$ is a (proper) $\st$-idempotent quasi-$\st$-ideal of $D$.
%Moreover, if $I$ is a ($\st$-proper) $\st$-idempotent ideal of $D$, then $I^{\st} \cap D$ is a (proper) quasi-$\st$-idempotent quasi-$\st$-ideal of $D$. To verify this, let $I$ is be a $\st$-idempotent ideal of $D$.  Then $$(I^{\st}\cap D)^{\st}=I^{\st}=(I^2)^{\st} \subseteq ((I^{\st}\cap D)^2)^{\st}\subseteq (I^{\st}\cap D)^{\st}.$$  If follows that $(I^{\st}\cap D)^{\st}=((I^{\st}\cap D)^2)^{\st}$.
%Using these comments, along with Corollary~\ref{c:idemp} and Lemma~\ref{l:branched-idempotent}(1,2), we have the following extension of Corollary~\ref{c:strdisc}.% is a straightforward extension of Corollary~\ref{c:idemp}.
Applying Corollary~\ref{c:idemp} and Lemma~\ref{l:branched-idempotent}(1,2), we have the following extension of Corollary~\ref{c:strdisc}.

%%%%% COROLLARY \label{c:starstrdisc}
\begin{corollary} \label{c:starstrdisc} \noindent  Let $D$ be a domain. \begin{enumerate}
\item Assume that $D$ is a P$\st$MD for some semistar operation $\st$ on $D$. \begin{enumerate}
\item If $D$ is $\stf$-discrete, then a nonzero quasi-$\stf$-ideal $I$ of $D$ is $\stf$-idempotent if and only if $I$ is a radical ideal each of whose minimal primes is unbranched.
\item If $D$ is $\stf$-strongly discrete, then $D$ has no $\stf$-proper $\stf$-idempotent  ideals.
\end{enumerate}%\qed%\begin{corollary} \label{c:starstrdisc} A $\stf$-strongly discrete P$\star$MD has no nonzero $\stf$-proper {\gc Define $I$  to be $\st$-proper if $I^{\st} \ne D^{\st}$} $\stf$-idempotent ideals. {\gc Need to restrict to quasi-$\stf$-ideals in def of $\stf$-idempotent???  Not exactly: Suppose that $I$ is $\stf$-idempotent.  We claim that $I^{\stf} \cap D$ is a quasi-$\stf$-ideal that is $\stf$-idempotent.  Of course, $I^{\stf}\cap D$ is quasi-$\stf$, and it is easy to see that ($I^{\stf} \cap D)^{\stf}=I^{\stf}$.  Then $$(I^{\stf}\cap D)^{\stf}=I^{\stf}=(I^2)^{\stf} \subseteq ((I^{\stf}\cap D)^2)^{\stf}\subseteq (I^{\stf}\cap D)^{\stf}.$$  If follows that $(I^{\stf}\cap D)^{\stf}=((I^{\stf}\cap D)^2)^{\stf}$, as desired.} \qed
\item Assume that $D$ is a P$v$MD. \begin{enumerate}
\item If $D$ is $t$-discrete, then a $t$-ideal $I$ of $D$ is $t$-idempotent if and only if $I$ is a radical ideal each of whose minimal primes is unbranched.
\item If $D$ is $t$-strongly discrete, then $D$ has no $t$-proper $t$-idempotent  ideals.
\end{enumerate}
\end{enumerate}
\end{corollary}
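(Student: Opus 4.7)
My plan is to deduce both parts of the corollary by the Nagata-ring transfer already exploited throughout the paper: translate every piece of the hypothesis and conclusion to the Prüfer domain $\na$ via Lemma~\ref{l:branched-idempotent} and Proposition~\ref{p:discrete}, then invoke Corollary~\ref{c:strdisc}. Since part (2) is the specialization of part (1) to the (semi)star operation $\star=v$ (so that $\stf = t$), only part (1) requires real argument.

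For (1)(a), the forward implication falls out of Corollary~\ref{c:idemp}: if $I$ is $\stf$-idempotent, then $I$ is a radical ideal each of whose minimal primes is a $\stf$-idempotent quasi-$\stf$-prime, and the $\stf$-discreteness hypothesis forbids any such prime from being branched, so the minimal primes of $I$ are unbranched. For the converse, the minimal primes of the quasi-$\stf$-ideal $I$ are themselves quasi-$\stf$-primes; given a minimal prime $P$ that is unbranched in $D$, Lemma~\ref{l:branched-idempotent}(2)(b) shows $P\na$ is unbranched in the Prüfer domain $\na$, hence idempotent by \cite[Theorem 23.3(b)]{G}. Lemma~\ref{l:branched-idempotent}(1) (together with $\stf = \stt$ on a P$\star$MD, Lemma~\ref{l:p*md}(2a)) translates this back to $\stf$-idempotence of $P$, and then the reverse direction of Corollary~\ref{c:idemp} gives $\stf$-idempotence of $I$.

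For (1)(b), I will argue by contradiction. Suppose $I$ is a $\stf$-proper $\stf$-idempotent ideal of $D$. Then the quasi-$\stf$-closure $I^{\stf}\cap D$ is a proper quasi-$\stf$-ideal, hence is contained in some quasi-$\stf$-maximal ideal $M$, so $I\na \subseteq M\na \subsetneq \na$ by Lemma~\ref{l:nagata}(3). By Lemma~\ref{l:branched-idempotent}(1), $I\na$ is idempotent; but Proposition~\ref{p:discrete} identifies $\na$ as a strongly discrete Prüfer domain, contradicting Corollary~\ref{c:strdisc}(2).

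I do not foresee a genuinely hard step: the argument is a transparent assembly of the transfer lemmas. The one point requiring care is the translation of "$\stf$-proper" into a proper ideal of $\na$, which is what forces one to pass to the quasi-$\stf$-closure of $I$ before extending to $\na$; once that is handled, Corollary~\ref{c:strdisc} and Lemma~\ref{l:branched-idempotent} do all the remaining work.
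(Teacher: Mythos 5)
Your proposal is correct and follows essentially the route the paper intends: the paper omits the proof, stating only that the result follows by ``applying Corollary~\ref{c:idemp} and Lemma~\ref{l:branched-idempotent}(1,2),'' and your argument is precisely a careful assembly of those ingredients (together with Proposition~\ref{p:discrete} and Corollary~\ref{c:strdisc} for part (1)(b), and the specialization $\star=v$ for part (2)). The details you supply --- minimal primes of a quasi-$\stf$-ideal being quasi-$\stf$-primes, the unbranched-implies-idempotent step via \cite[Theorem 23.3(b)]{G} in $\na$, and the passage through the quasi-$\stf$-closure to handle $\stf$-properness --- are all sound.
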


\section{Divisoriality}

%\smallskip

According to \cite
[Corollary 4.1.14]{FHP97}, if $D$ is a doublesharp Pr\"ufer domain and $P$ is a nonzero, nonmaximal ideal of $D$, then $P$ is divisorial.  The natural question arises: If $D$ is a $\stf$-doublesharp P$\st$MD and $P \in \QSpec^{\stf}(D)\setminus \QMax^{\stf}(D)$, is $P$ necessarily divisorial?  Since $\st$ is an arbitrary semistar operation and divisoriality specifically involves the $v$-operation, one might expect the answer to be negative.  Indeed, we give a counterexample in Example~\ref{e:div} below.  However, in Theorem~\ref{t:##} we  prove a general result, a corollary of which does yield divisoriality in the ``ordinary'' P$v$MD case.  First, we need a lemma, the first part of which may be regarded as an extension of \cite[Proposition 2.2(2)]{K-89}.

\begin{lemma} \label{l:div} Let $\st$ be a semistar operation on $D$.  Then \begin{enumerate}%, and let $I$ be a nonzero ideal of $D$. Then $ID^{\stt}$ is a divisorial ideal of $D^{\stt}$ if and only if $I\na$ is a divisorial ideal of $\na$.
\item $(D^{\stt}:(D^{\stt}:E))\na=(\na:(\na:E))$ for each $E \in \Fb(D)$, and %; in particular, if $D$ is a P$\st$MD, then $(D^{\st}:(D^{\st}:I))\na=(\na:(\na:I))$ for each $I \in \Fb(D)$.
\item if $I$ is a nonzero ideal of $D$, then $I^{\stt}$ is a divisorial ideal of $D^{\stt}$ if and only if $I\na$ is a divisorial ideal of $\na$.
\end{enumerate}
In particular, if $D$ is a P$\st$MD, then $(D^{\st}:(D^{\st}:E))\na=(\na:(\na:E))$ for each $E \in \Fb(D)$; and, for a nonzero ideal $I$ of $D$, $I^{\stf}$ is divisorial in $D^{\st}$ if and only if $I\na$ is divisorial in $\na$.
\end{lemma}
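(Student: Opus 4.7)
The plan is to deduce both parts from Lemma~\ref{l:ext}(3) (which gives $(D^{\stt}:E)\na = (\na:E)$ for every $E \in \Fb(D)$) applied twice, combined with the $K$-intersection formula $E^{\stt} = E\na \cap K$ from Lemma~\ref{l:nagata}(6).

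For (1), first apply Lemma~\ref{l:ext}(3) to $E$ to obtain $(\na:E) = (D^{\stt}:E)\na$. Setting $F := (D^{\stt}:E) \in \Fb(D)$, a second application yields $(\na:F) = (D^{\stt}:F)\na$. The identification $(\na:(\na:E)) = (\na:F\na) = (\na:F)$ is automatic: $F \subseteq F\na$ gives one inclusion, and any $u$ with $uF \subseteq \na$ satisfies $uF\na \subseteq \na \cdot \na = \na$. Chaining these equalities produces the desired formula $(D^{\stt}:(D^{\stt}:E))\na = (\na:(\na:E))$.

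For (2), I would use Lemma~\ref{l:nagata}(6) (so that $I\na = I^{\stt}\na$ and $E\na \cap K = E^{\stt}$ for every $E \in \Fb(D)$) together with the standard semistar identity $(D^{\stt}:I) = (D^{\stt}:I^{\stt})$. Divisoriality of $I\na$ reads $I\na = (\na:(\na:I))$, which by (1) becomes $I\na = (D^{\stt}:(D^{\stt}:I))\na$. Intersecting both sides with $K$ and invoking Lemma~\ref{l:nagata}(6) yields $I^{\stt}$ on the left and $(D^{\stt}:(D^{\stt}:I))^{\stt}$ on the right, so provided this last module coincides with $(D^{\stt}:(D^{\stt}:I))$ itself, we get exactly the divisoriality of $I^{\stt}$ over $D^{\stt}$. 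The converse runs in reverse: given $I^{\stt} = (D^{\stt}:(D^{\stt}:I))$ in $D^{\stt}$, extend to $\na$ and apply (1) to recover divisoriality of $I\na$.

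The main obstacle is therefore the auxiliary fact that the colon module $(D^{\stt}:E)$ is itself $\stt$-closed for every $E \in \Fb(D)$. I would argue this element by element: for $a \in E$, axiom $(\star_1)$ applied to $\stt$, together with idempotence of $\stt$, gives $a(D^{\stt}:E)^{\stt} = (a(D^{\stt}:E))^{\stt} \subseteq (D^{\stt})^{\stt} = D^{\stt}$, so $(D^{\stt}:E)^{\stt} \subseteq (D^{\stt}:a)$; intersecting over $a \in E$ yields $(D^{\stt}:E)^{\stt} \subseteq (D^{\stt}:E)$. Finally, the ``in particular'' clause for P$\st$MDs is immediate from Lemma~\ref{l:p*md}(2a), which supplies $\stf = \stt$ and $D^{\st} = D^{\stt}$.
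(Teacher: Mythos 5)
Your proposal is correct and follows essentially the same route as the paper: both parts are obtained by iterating Lemma~\ref{l:ext}(3) together with the identities $E^{\stt}=E\na\cap K$ and $E^{\stt}\na=E\na$ from Lemma~\ref{l:nagata}(6), and the ``in particular'' clause from Lemma~\ref{l:p*md}(2a). The only (harmless) difference is in the direction where $I\na$ is assumed divisorial: the paper simply observes $(D^{\stt}:(D^{\stt}:I^{\stt}))\subseteq (D^{\stt}:(D^{\stt}:I^{\stt}))\na\cap K=I\na\cap K=I^{\stt}$ and uses the trivial reverse inclusion, whereas you establish the (correct, slightly stronger) auxiliary fact that $(D^{\stt}:E)$ is $\stt$-closed before intersecting with $K$.
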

\begin{proof} Set $\cal N=\na$.  For (1), applying Lemma~\ref{l:ext}, we have $$(D^{\stt}:(D^{\stt}:E))\cal N=(\cal N:(D^{\stt}:E))=(\cal N:(\cal N:E)).$$

(2)  Assume that $I$ is a nonzero ideal of $D$.  If $I^{\stt}$ is divisorial in $D^{\stt}$, then (using (1)) $$(\cal N:(\cal N:I\cal N))=(D^{\stt}:(D^{\stt}:I^{\stt}))\cal N=I^{\stt}\cal N=I\cal N.$$

Now suppose that $I\cal N$ is divisorial.  Then $$(D^{\stt}:(D^{\stt}:I^{\stt}))\cal N=(\cal N:(\cal N:I))=I\cal N,$$ whence $$(D^{\stt}:(D^{\stt}:I^{\stt}))\subseteq I\cal N \cap K=I^{\stt}.$$
%{\gc Fix:} Then$$(\cal N:(\cal N:I^{\stt}))=(D^{\stt}:(D^{\stt}:I^{\stt}))\cal N=I^{\stt}\cal N,$$ whence $$(D^{\stt}:(D^{\stt}:I^{\st}))\subseteq I\cal N \cap K=I^{\stt}.$$
The ``in particular'' statement follows from standard considerations.
\end{proof}

\begin{theorem} \label{t:##} Let $\st$ be a semistar operation on $D$ such that $D$ is a $\stf$-doublesharp P$\st$MD, and let $P \in \QSpec^{\stf}(D) \setminus \QMax^{\stf}(D)$.  Then $P^{\stf}$ is a divisorial ideal of $D^{\st}$.
\end{theorem}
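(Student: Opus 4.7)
The plan is to push the problem up to the semistar Nagata ring $\na$, apply the classical doublesharp divisoriality result there, and then pull the conclusion back down to $D$ via the transfer lemmas already established.

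First, I would invoke Lemma~\ref{l:p*md}(1) to note that $\na$ is a Pr\"ufer domain, and Lemma~\ref{l:branched-idempotent}(5b) to conclude that $\na$ is in fact a doublesharp Pr\"ufer domain. Next, I would use Lemma~\ref{l:p*md}(2b), which says $P \mapsto P\na$ is a bijection $\QSpec^{\stf}(D) \to \Spec(\na)$, together with the corresponding bijection $\QMax^{\stf}(D) \to \Max(\na)$ of Lemma~\ref{l:nagata}(3), to observe that since $P \in \QSpec^{\stf}(D) \setminus \QMax^{\stf}(D)$, the extended prime $P\na$ is a nonzero, nonmaximal prime ideal of $\na$.

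Now I would apply the cited result \cite[Corollary 4.1.14]{FHP97}: in the doublesharp Pr\"ufer domain $\na$, every nonzero nonmaximal prime is divisorial, so $P\na$ is divisorial in $\na$.

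Finally, to descend, I would use Lemma~\ref{l:div}(2) (the ``in particular'' statement, valid because $D$ is a P$\st$MD), which asserts that for a nonzero ideal $I$ of $D$, $I^{\stf}$ is divisorial in $D^{\st}$ if and only if $I\na$ is divisorial in $\na$. Taking $I=P$, the already-established divisoriality of $P\na$ in $\na$ yields divisoriality of $P^{\stf}$ in $D^{\st}$, which is the desired conclusion. The main obstacle is really only bookkeeping: confirming that in a P$\st$MD one has $\stf = \stt$ and $D^{\st} = D^{\stt}$ (Lemma~\ref{l:p*md}(2a)) so that the transfer lemmas apply in the forms stated; once this is in place the proof is a direct chain of applications of the preparatory lemmas.
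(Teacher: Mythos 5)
Your proposal is correct and follows exactly the paper's argument: pass to $\na$, note it is a doublesharp Pr\"ufer domain via Lemma~\ref{l:branched-idempotent}(5), apply \cite[Corollary 4.1.14]{FHP97} to the nonmaximal prime $P\na$, and descend with Lemma~\ref{l:div}. The extra bookkeeping you supply (the bijections on primes and maximal ideals, and $\stf=\stt$ in a P$\st$MD) is exactly what the paper leaves implicit.
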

\begin{proof} Since ${\rm Na}(D,\st)$ is a doublesharp Pr\"ufer domain (Lemma~\ref{l:branched-idempotent}(5)), $P{\rm Na}(D,\st)$ is divisorial by \cite[Corollary 4.1.14]{FHP97}.  Hence $P^{\stf}$ is divisorial in $D^{\st}$ by Lemma~\ref{l:div}.%Using this fact, along with a double application of Lemma~\ref{l:ext}, we have $$P^{\st} \subseteq (D^{\st}:(D^{\st}:P)){\rm Na}(D,\st) = ({\rm Na}(D,\st):({\rm Na}(D,\st):P))=P{\rm Na}(D,\st).$$ Therefore, $P^{\st} \cap D \subseteq P{\rm Na}(D,\st) \cap D=P$ {\gc ref???}, as desired.
\end{proof}

\begin{corollary} \label{c:pvmd} If $D$ is a $t$-doublesharp P$v$MD,  and $P$ is a non-$t$-maximal $t$-prime of $D$, then %$t$-doublesharp P$v$MD and $P$ is a $t$-prime that is not a maximal $t$-ideal, then
$P$ is divisorial.
\end{corollary}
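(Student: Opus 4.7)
The plan is to derive this corollary as a direct specialization of Theorem~\ref{t:##}, taking the semistar operation $\st$ to be the $v$-operation. Under this choice all of the ambient $\st$-notation collapses onto the classical $v$/$t$/$w$ setting: $\stf$ becomes the $t$-operation, $\stt$ becomes the $w$-operation, and $\na$ becomes the classical Nagata ring ${\rm Na}(D,v)$.

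With these identifications, the hypothesis that $D$ is a $\stf$-doublesharp P$\st$MD becomes exactly the assumption that $D$ is a $t$-doublesharp P$v$MD. Likewise, since $v$ arises from an honest star operation on $D$, the set $\QSpec^{\stf}(D)$ is just the set of (integral) $t$-primes of $D$, and $\QMax^{\stf}(D)$ is the set of $t$-maximal ideals; so the hypothesis $P \in \QSpec^{\stf}(D) \setminus \QMax^{\stf}(D)$ in Theorem~\ref{t:##} is precisely the condition that $P$ is a non-$t$-maximal $t$-prime. Theorem~\ref{t:##} therefore applies and yields that $P^{\stf}$ is a divisorial ideal of $D^{\st}$.

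To finish, I would use two immediate simplifications: $D^{\st} = D^v = D$ (since the $v$-operation sends $D$ to itself), and $P^{\stf} = P^t = P$ (since $P$ is assumed to be a $t$-ideal). Thus the statement ``$P^{\stf}$ is divisorial in $D^{\st}$'' reduces to the assertion that $P = P^v$, which is exactly the conclusion that $P$ is divisorial in $D$. There is really no obstacle here beyond bookkeeping between the semistar notation and the classical $v$/$t$ terminology; the entire corollary amounts to verifying that Theorem~\ref{t:##}, whose real content lies in the Nagata-ring transfer of \cite[Corollary 4.1.14]{FHP97}, specializes correctly when $\st = v$.
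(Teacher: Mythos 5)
Your proposal is correct and is essentially the paper's own proof: both specialize Theorem~\ref{t:##} to $\st = v$ (more precisely, to a semistar extension of the star operation $v$, so that $\stf$ restricted to integral ideals is the $t$-operation) and then observe that $D^{\st}=D$ and $P^{\stf}=P^t=P$, so the conclusion of Theorem~\ref{t:##} is exactly the divisoriality of $P$ in $D$.
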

\begin{proof}
Take $\st=v$ in Theorem~\ref{t:##}. (More precisely, take $\st$ to be any extension of the star operation $v$ on $D$ to a semistar operation on $D$, so that $\stf$ (restricted to $D$) is the $t$-operation on $D$.) Then $P=P^t=P^{\stf}$ is divisorial by Theorem~\ref{t:##}.%Applying the theorem, we then  have $P^v=P^v \cap D = P$, that is, $P$ is divisorial.
\end{proof}

\begin{example} \label{e:div} %Let $\mathbb Z_{p}$ be the ring of $p$-adic integers and
Let $p$ be a prime integer and let $D:={\rm Int}(\mathbb Z_{(p)})$.
Then $D$ is a 2-dimensional Pr\"{u}fer domain by \cite[Lemma VI.1.4 and Proposition V.1.8]{cc}.
Choose a height 2 maximal ideal $M$ of $D$, and let $P$ be a height 1 prime ideal of $D$ contained in $M$.
Then $P=q\mathbb Q[X]\cap D$ for some irreducible polynomial $q\in \mathbb Q[X]$ \cite[Proposition V.2.3]{cc}.
By \cite[Theorems VIII.5.3 and VIII.5.15]{cc}, $P$ is not a divisorial ideal of $D$. Set $E^{\st}=ED_M$ for $E \in \Fb(D)$.
%Let $\star:=\texttt{s}_M$, i.e., $E^{\star} :=ED_M$ for each $E\in  \Fb(D)$.
Then, $\star$ is a finite-type semistar operation on $D$. Clearly, $M$ is the only quasi-$\st$-maximal ideal of $D$, and, since $D_M$ is a valuation domain, $D$ is a P$\st$MD by Lemma~\ref{l:p*md}.  Moreover, ${\rm Na}(D,\st)=D_M(X)$ is also a valuation domain and hence a doublesharp Pr\"ufer domain, which yields that $D$ is a $\stf$-doublesharp P$\st$MD $($Lemma~\ref{l:branched-idempotent}$)$. Finally, since $P=PD_M \cap D=P^{\st}\cap D$, $P$ is a non-$\stf$-maximal quasi-$\stf$-prime of $D$. \qed
% and hence $\tilde{\star}=\star$.
%Note that $D_M$ is a valuation domain and so $\star$ is an \texttt{eab} semistar  operation, hence $\star= \star_a$  and therefore $D$ is a P$\star$MD by Lemma \ref{Na=Kr}{\mc ((1), (i)$\Leftrightarrow$ (iv))}.
%Let  $R:=D^{\star} \, ( = D_M)$  and let $\star^{|_R}$ be the restriction of $\star$ to $\Fb(R)$; since $R$ is a valuation domain then $\star^{|_R}=t_R=d_R$.
%%Thus $R$ is a P$d_R$MD for the semistar operation $d_R$ of $R$.
%%Therefore, $D$ is a P$\star$MD by Lemma 1 in [note-05182017].
%Clearly $R$ is a $t_R$-doublesharp domain,  and so $D$ is a $\stf$-doublesharp domain
% by Proposition \ref{thm}(2).
% Also, by Lemma \ref{lemma3},
% $P\in \QSpec^{\stf}(D)\setminus \QMax^{\stf}(D)$.
 \end{example}

 %{\gc But I think the better question is: Is it true that if $D$ is a P$\star$MD and a $\stf$-doublesharp domain and if $P \in \QSpec^{\stf}(D)\setminus \QMax^{\stf}(D)$ then $P$ is a quasi-$\st$-ideal?  Maybe $P$ is a quasi-$\st$ ideal if and only if $P{\rm Na}(D,\st)$ is divisorial?  Probably not, but I think this has been taken care of now.}

In the remainder of the paper, we impose on Pr\"ufer domains (P$\st$MDs) the finite character (finite $\stf$-character) condition.  As we shall see, this allows improved versions of Theorem~\ref{t:idemp} and Corollary~\ref{c:idemp}.  It also allows a type of unique factorization for  (quasi-$\stf$-)ideals that are simultaneously ($\stf$-)idempotent and ($\stf$-)divisorial.

\begin{theorem} \label{t:idemp-div} Let $D$ be a Pr\"ufer domain with finite character, and let $I$ be a nonzero ideal of $D$.  Then:\begin{enumerate}
\item $I$ is idempotent if and only if $I$ is a product of idempotent prime ideals.
\item The following statements are equivalent. \begin{enumerate}
\item $I$ is idempotent and divisorial.
\item $I$ is a product of non-maximal idempotent prime ideals.
\item $I$ is a product of divisorial idempotent prime ideals.
\item $I$ has a unique representation as the product of incomparable divisorial idempotent primes.
\end{enumerate}
\end{enumerate}
\end{theorem}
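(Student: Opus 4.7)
The plan is to derive (1) from Theorem~\ref{t:idemp} together with a Prüfer-specific observation about incomparable primes, and to reduce (2) to two valuation-theoretic lemmas that one transfers locally.

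For (1), I would first record: in any Prüfer domain, a finite collection $P_1,\ldots,P_n$ of pairwise incomparable primes satisfies $P_1\cdots P_n=P_1\cap\cdots\cap P_n$, since localizing at any maximal $M$ leaves at most one of the $P_iD_M$ proper (else two would be comparable in the valuation $D_M$), forcing both sides to coincide. Finite character then guarantees that each nonzero ideal has only finitely many minimal primes. Theorem~\ref{t:idemp} says $I$ is idempotent iff $I$ is radical with idempotent minimal primes, so $I=\bigcap P_i=\prod P_i$ is a finite product of idempotent primes. The converse is immediate from $(\prod P_i)^2=\prod P_i^2=\prod P_i$.

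For (2), I would rely on two facts. \emph{(K1)} In a Prüfer domain $D$ an idempotent maximal ideal $M$ is never divisorial: if $x\in(D:M)\setminus D$, then for each maximal $N\ne M$, $MD_N=D_N$ forces $x\in D_N$; hence $x\notin D_M$, so $v_M(x)<0$. But $xm\in D$ for every $m\in M$ forces $v_M(m)\ge-v_M(x)>0$, contradicting the density of $v_M(M)$ near $0$, which follows from $MD_M=(MD_M)^2$. So $(D:M)=D$ and $M^v=D\ne M$. \emph{(K2)} In a Prüfer domain $D$ of finite character a nonmaximal idempotent prime $P$ is divisorial: each localization $PD_M$ with $M\supseteq P$ is nonmaximal idempotent in the valuation $D_M$, and a value-group calculation (idempotence forces $\inf v_M(P)$ to be an unrealized Dedekind cut) gives $(PD_M)^v=PD_M$. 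Finite character makes $(D:P)$ commute with localization, yielding $(P^v)D_M=(PD_M)^v=PD_M$ for every maximal $M$, and therefore $P^v=\bigcap_M P^vD_M=\bigcap_M PD_M=P$.

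Granting (K1) and (K2), I would close (2) as follows. (c)$\Rightarrow$(a): after absorbing comparable factors via $P_iP_j=P_i$ when $P_i\subseteq P_j$ (using idempotence of $P_i$), one has $I=\bigcap P_i$, an intersection of divisorial ideals, hence divisorial; idempotence is clear from (1). (a)$\Rightarrow$(b): apply (1) to write $I=\prod P_i$ with $P_i$ incomparable idempotent; if some $P_j=M$ were maximal, then $ID_M=MD_M$ would be divisorial in $D_M$ (since divisoriality localizes in finite-character Prüfer), contradicting (K1) applied to the valuation $D_M$. (b)$\Rightarrow$(c): apply (K2) to each factor. (c)$\Leftrightarrow$(d): the absorption step yields (d) from (c), and uniqueness follows since any prime containing $\prod P_i$ must contain some $P_i$, forcing the incomparable primes in the factorization to coincide with the minimal primes of $I$. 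The main obstacle will be (K2); the valuation-level calculation is clean, but the local-to-global transfer hinges on $(D:P)D_M=(D_M:PD_M)$, which can fail for non-finitely-generated $P$ in the absence of finite character.
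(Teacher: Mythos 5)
Your part (1), your handling of (c)$\Rightarrow$(a), and your uniqueness argument for (d) are essentially the paper's: finite character plus Theorem~\ref{t:idemp} gives finitely many pairwise incomparable (hence, in a Pr\"ufer domain, comaximal) idempotent minimal primes whose product equals their intersection, and matching an arbitrary factorization into divisorial idempotent primes against the minimal primes works exactly as you describe. Your proof of (K1) is also sound.

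The genuine gap is in (a)$\Rightarrow$(b) and (b)$\Rightarrow$(c), both of which you rest on the claim that divisoriality localizes, i.e.\ that $(D:J)D_M=(D_M:JD_M)$, equivalently $J^vD_M=(JD_M)^v$, for a (non-finitely-generated) ideal $J$ of a finite-character Pr\"ufer domain. You flag this yourself as the obstacle in (K2), but you also use it silently in (a)$\Rightarrow$(b) (``$ID_M=MD_M$ would be divisorial in $D_M$''), and you never prove it; the inclusion $(D:J)D_M\subseteq(D_M:JD_M)$ is automatic but the reverse is exactly the kind of statement that fails for non-finitely-generated ideals, and finite character does not make it obvious (such a domain need not be $h$-local, since a prime may lie in two maximal ideals). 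As written, both implications are therefore incomplete. The paper avoids localization entirely. For (a)$\Rightarrow$(b) it argues globally: writing $I=P_1\cdots P_n=P_1\cap\cdots\cap P_n$ with the $P_i$ the (incomparable) minimal primes, one has $(P_1)^vP_2\cdots P_n\subseteq(P_1\cdots P_n)^v=I^v=I\subseteq P_1$, and since $P_2\cdots P_n\nsubseteq P_1$ and $P_1$ is prime this forces $(P_1)^v\subseteq P_1$; thus each $P_i$ is divisorial, and non-maximal by the well-known fact (your (K1)). For (b)$\Rightarrow$(c) it invokes the Gilmer--Heinzer theorem that a finite-character Pr\"ufer domain is doublesharp \cite{GH-67}, together with \cite[Corollary 4.1.14]{FHP97}, which makes \emph{every} nonzero nonmaximal prime divisorial, with no idempotence hypothesis and no localization. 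If you want to salvage your local route you must first establish the trace-localization identity; the global product computation is shorter and self-contained.
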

\begin{proof} (1) Suppose that $I$ is idempotent. By Theorem~\ref{t:idemp}, $I$ is the intersection of its minimal primes, each of which is idempotent.  Since $D$ has finite character, $I$ is contained in only finitely many maximal ideals, and, since no two distinct minimal primes of $I$ can be contained in a single maximal ideal, $I$ has only finitely many minimal primes and they are comaximal.  Hence $I$ is the product of its minimal primes (and each is idempotent).  The converse is trivial.

(2) (a) $\Rightarrow$ (b): Assume that $I$ is idempotent and divisorial.  By (1) and its proof,   $I=P_1 \cdots P_n=P_1 \cap \cdots \cap P_n$, where the $P_i$ are the minimal primes of $I$.  We claim that each $P_i$ is divisorial.  To see this, observe that $$(P_1)^vP_2 \cdots P_n \subseteq (P_1 \cdots P_n)^v=I^v=I\subseteq P_1.$$  Since the $P_i$ are incomparable, this gives $(P_1)^v \subseteq P_1$, that is, $P_1$ is divisorial. By symmetry each $P_i$ is divisorial.  It is well known that in a Pr\"ufer domain, a maximal ideal cannot be both idempotent and divisorial. Hence the $P_i$ are non-maximal.

(b) $\Rightarrow$ (c): Since $D$ has finite character, it is a ($d$)-doublesharp Pr\"ufer domain \cite[Theorem 5]{GH-67}, whence nonmaximal primes are automatically divisorial by \cite[Corollary 4.1.14]{FHP97}.

(c) $\Rightarrow$ (a): Write $I=Q_1 \cdots Q_m$, where each $Q_j$ is a divisorial idempotent prime.  Since $I$ is idempotent (by (1)), we may also write $I=P_1 \cdots P_n$, where the $P_i$ are the minimal primes of $I$. For each $i$, we have $Q_1 \cdots Q_m= I\subseteq P_i$, from which it follows that $Q_j \subseteq P_i$ for some $j$.  By minimality, we must then have $Q_j=P_i$.  Thus each $P_i$ is divisorial, whence $I=P_1 \cap \cdots \cap P_n$ is divisorial.

Finally, we show that (d) follows from the other statements.  We use the notation in the proof of  (c) $\Rightarrow$ (a). In the expression $I=P_1 \cdots P_n$, the $P_i$ are (divisorial, idempotent, and) incomparable, and it is clear  that no $P_i$ can be omitted. To see that this is the only such expression, consider a representation $I=Q_1\cdots Q_m$, where the $Q_i$ are divisorial, idempotent, and incomparable.  %To complete the proof, we may suppose that the $Q_k$ are incomparable and must show that each $Q_k$ is a $P_i$. To this end,
Fix a $Q_k$.  Then $P_1\cdots P_n =I \subseteq Q_k$, and we have $P_i\subseteq Q_k$ for some $i$.  However, as above, $Q_j \subseteq P_i$ for some $j$, whence, by incomparability, $Q_k=P_i$. The conclusion now follows easily.
\end{proof}

We note that incomparability is necessary for uniqueness above--for example, if $D$ is a valuation domain and $P \subsetneq Q$ are non-maximal (necessarily divisorial) primes, then $P=PQ$.

We close by extending Theorem~\ref{t:idemp-div} to P$\st$MDs and then to ``ordinary'' P$v$MDs.  We omit the (by now) straightforward proofs.

%%%%%% COROLLARY \label{c:tfinchar}

\begin{corollary} \label{c:idemp-div} Let $\st$ be a semistar operation on $D$ such that $D$ is  a P$\st$MD with finite $\stf$-character, and let $I$ be a quasi-$\stf$-ideal of $D$.  Then:\begin{enumerate}
\item $I$ is $\stf$-idempotent if and only if $I^{\stf}$ is a $\stf$-product of $\stf$-idempotent quasi-$\stf$-prime ideals in $D$, that is, $I^{\stf}=(P_1\cdots P_n)^{\stf}$, where the $P_i$ are $\stf$-idempotent quasi-$\stf$-primes of $D$.
\item The following statements are equivalent. \begin{enumerate}
\item $I$ is $\stf$-idempotent and $\stf$-divisorial $(I^{\stf}$ is divisorial in $D^{\st})$.
\item $I$ is a $\stf$-product of non-quasi-$\stf$-maximal idempotent quasi-$\stf$-prime ideals.
\item $I$ is a $\stf$-product of $\stf$-divisorial $\stf$-idempotent prime ideals.
\item $I$ has a unique representation as a $\stf$-product of incomparable $\stf$-divisorial $\stf$-idempotent primes.
\end{enumerate}
\end{enumerate}
\end{corollary}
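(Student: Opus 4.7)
The plan is to prove this corollary by the same Nagata-ring transfer used in Corollary~\ref{c:idemp} and Corollary~\ref{c:starstrdisc}. Since $D$ is a P$\st$MD, the ring $\na$ is a Pr\"ufer domain (Lemma~\ref{l:p*md}(1b)), and finite $\stf$-character of $D$ forces finite character on $\na$ by Lemma~\ref{l:branched-idempotent}(3); hence Theorem~\ref{t:idemp-div} applies verbatim to $\na$. The dictionary doing all the real work is: $\stf$-idempotence of $I$ corresponds to idempotence of $I\na$ (Lemma~\ref{l:branched-idempotent}(1) together with $\stf=\stt$ from Lemma~\ref{l:p*md}(2a)); $\stf$-divisoriality of $I^{\stf}$ in $D^{\st}$ corresponds to divisoriality of $I\na$ in $\na$ (Lemma~\ref{l:div}(2)); and $P\mapsto P\na$ is an inclusion-preserving bijection between $\QSpec^{\stf}(D)$ and $\Spec(\na)$ that sends quasi-$\stf$-maximal ideals to maximal ideals (Lemma~\ref{l:p*md}(2b) and Lemma~\ref{l:nagata}(3)).

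For part (1), I would argue forward as follows. Given $\stf$-idempotent $I$, the ideal $I\na$ is idempotent in $\na$, so Theorem~\ref{t:idemp-div}(1) gives a factorization $I\na=(P_1\na)\cdots(P_n\na) = (P_1\cdots P_n)\na$ whose factors are idempotent primes of $\na$; by Lemma~\ref{l:p*md}(2b) each $P_i$ is a quasi-$\stf$-prime of $D$, and each is $\stf$-idempotent by Lemma~\ref{l:branched-idempotent}(1). Intersecting with $K$ and using Lemma~\ref{l:nagata}(6) with $\stf=\stt$ yields
\[
I^{\stf} = I\na\cap K = (P_1\cdots P_n)\na\cap K = (P_1\cdots P_n)^{\stf}.
\]
The converse reverses these steps: if $I^{\stf}=(P_1\cdots P_n)^{\stf}$ with each $P_i$ an $\stf$-idempotent quasi-$\stf$-prime, then $I\na=(P_1\cdots P_n)\na=(P_1\na)\cdots(P_n\na)$ is a product of idempotent primes, hence idempotent, and Lemma~\ref{l:branched-idempotent}(1) recovers the $\stf$-idempotence of $I$.

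For part (2), every equivalence and the uniqueness claim get pulled back from Theorem~\ref{t:idemp-div}(2) applied to $I\na$. Lemma~\ref{l:div}(2) identifies condition (a) with ``$I\na$ is idempotent and divisorial in $\na$''; the $\QSpec$--$\Spec$ bijection, together with preservation of the properties \emph{non-maximal}, \emph{divisorial} (Lemma~\ref{l:div}(2) again, applied to the factors), \emph{idempotent} (Lemma~\ref{l:branched-idempotent}(1)), and \emph{incomparable}, matches conditions (b), (c), and (d) with the corresponding product conditions in $\na$. The uniqueness assertion in (d) transfers because the correspondence $P\leftrightarrow P\na$ is a bijection on the respective factor sets.

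I expect no genuine obstacle---the argument is essentially dictionary translation---but the point needing a little care is verifying that the statement ``$I^{\stf}$ is an $\stf$-product $(P_1\cdots P_n)^{\stf}$ of $\stf$-idempotent quasi-$\stf$-primes'' in $D$ is matched on the Nagata side by the ordinary product $(P_1\na)\cdots(P_n\na)$. This is delivered by the identity $E^{\stt}\na=E\na$ of Lemma~\ref{l:nagata}(6) combined with $\stf=\stt$ in a P$\st$MD, so the $\stf$-closure on the $D$-side dissolves upon extension to $\na$ and the factorizations on the two sides line up term by term.
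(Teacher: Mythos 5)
Your proposal is correct and is exactly the argument the paper has in mind: the authors explicitly omit the proof of Corollary~\ref{c:idemp-div} as a ``by now straightforward'' Nagata-ring transfer of Theorem~\ref{t:idemp-div}, and your dictionary (Lemma~\ref{l:branched-idempotent}(1),(3), Lemma~\ref{l:p*md}(1b),(2a),(2b), Lemma~\ref{l:nagata}(3),(6), and Lemma~\ref{l:div}(2)) is precisely the intended one. In particular, you correctly isolate the one point needing care, namely that $\stf$-products on the $D$-side match ordinary products on the $\na$-side via $E^{\stt}\na=E\na$ with $\stt=\stf$.
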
 %\qed
%\begin{proof} {\gc Do we need a proof?} (1) If $I$ is $\stf$-idempotent, then $I\na$ is idempotent (Lemma~\ref{l:branched-idempotent}(1)), and we have $I\Na=(P_1 \cdots P_n)\na$ for $\stf$-idempotent quasi-$\stf$-primes $P_1, \ldots, P_n$ of $D$ by Theorem~\ref{t:idemp-div} (and Lemma~\ref{l:p*md} {\gc more specific?}). Hence $I^{\stf}= I\na \cap K = P_1\cdots P_n\na \cap K=(P_1 \cdots P_n)^{\stf}$. The converse is straightforward.
%
%(2)  (a) $\ra$ (b): From (a) we  have $I\na$ idempotent and divisorial.  Then Theorem~\ref{t:idemp-div} yields $I\na=P_1 \cdots P_n\na$, where each $P_i\na$ is a non-maximal idempotent prime of $\na$.  Then $I^{\stf}=(P_1\cdots P_n)^{\stf}$, where the $P_i$ are non-quasi-$\stf$-maximal idempotent quasi-$\stf$-prime ideals.
%
%(b) $\ra$ (c): An equation as described in (b) yields that $I\na$ is a product of non-maximal  idempotent primes in $\na$, and this becomes via Theorem~\ref{t:idemp-div} a representation of $I\na$ as a product of divisorial idempotent primes.  Contracting to $K$ (and using Lemma~\ref{l:div} yields the desired representation as in (c).
%{\gc We omit the rest of the proof--improve this}
%\end{proof}

\begin{corollary} \label{c:idemp-div-pvmd} Let $D$ be a P$v$MD with finite $t$-character, and let $I$ be a nonzero $t$-ideal of $D$.  Then:\begin{enumerate}
\item $I$ is $t$-idempotent if and only if $I$ is a $t$-product of $t$-idempotent $t$-prime ideals in $D$, %that is $I^{\stf}=(P_1\cdots P_n)^{\stf}$, where the $P_i$ are $\stf$-idempotent quasi-$\stf$-primes of $D$.
\item The following statements are equivalent. \begin{enumerate}
\item $I$ is $t$-idempotent and divisorial.
\item $I$ is a $t$-product of non-$t$-maximal $t$-idempotent $t$-primes.
\item $I$ is a $t$-product of divisorial $t$-idempotent $t$-primes.
\item $I$ has a unique representation as a $t$-product of incomparable divisorial $t$-idempotent $t$-primes.
\end{enumerate}
\end{enumerate}
\end{corollary}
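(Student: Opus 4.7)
The plan is to derive Corollary~\ref{c:idemp-div-pvmd} directly as a specialization of Corollary~\ref{c:idemp-div}, using the same device employed to pass from Theorem~\ref{t:##} to Corollary~\ref{c:pvmd}. Specifically, I would take $\st$ to be any extension of the $v$-star operation on $D$ to a semistar operation on $\Fb(D)$; then $D^{\st}=D$ and $\stf$ restricted to $\F(D)$ is the $t$-operation on $D$.

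With this choice the translation is mechanical: the quasi-$\stf$-(prime, maximal) ideals of $D$ are precisely the (prime, maximal) $t$-ideals; $\stf$-idempotence of a $t$-ideal agrees with $t$-idempotence; a P$v$MD is automatically a P$\st$MD; and finite $t$-character is exactly finite $\stf$-character. Moreover, for a $t$-ideal $I$, because $I^{\stf}=I^t=I$ and $D^{\st}=D$, the condition ``$I^{\stf}$ is divisorial in $D^{\st}$'' becomes ``$I$ is divisorial in $D$''. A $\stf$-product $(P_1\cdots P_n)^{\stf}$ of quasi-$\stf$-primes translates to the $t$-product $(P_1\cdots P_n)^t$ of $t$-primes.

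Feeding these identifications into Corollary~\ref{c:idemp-div}, part (1) and the four-way equivalence in part (2) become exactly the corresponding statements in Corollary~\ref{c:idemp-div-pvmd}. I do not anticipate a genuine obstacle; the main thing to be careful about is alignment of the product notation, which requires only noting that a ``$t$-product'' in the statement of Corollary~\ref{c:idemp-div-pvmd} means the $t$-closure of the ordinary product of ideals, exactly as ``$\stf$-product'' does in Corollary~\ref{c:idemp-div}.
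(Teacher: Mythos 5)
Your proposal is correct and matches the paper's intent exactly: the authors explicitly omit the proof as a "straightforward" specialization of Corollary~\ref{c:idemp-div}, obtained by taking $\st$ to be an extension of the $v$-operation so that $\stf$ restricts to $t$, just as in the passage from Theorem~\ref{t:##} to Corollary~\ref{c:pvmd}. Your dictionary of identifications (quasi-$\stf$-ideals as $t$-ideals, $D^{\st}=D$, $\stf$-products as $t$-products) is precisely the translation the paper has in mind.
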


 \end{document}